\DeclareMathOperator{\Tr}{Tr}
\DeclareMathOperator{\N}{N}
\title{A Carlitz type result for linearized polynomials}
\author{Bence Csajb\'ok, Giuseppe Marino, Olga Polverino\thanks{\textcolor{black}{The
research  was supported by the Italian National Group for Algebraic and Geometric Structures and their Applications (GNSAGA
- INdAM). The first author is supported by the J\'anos Bolyai Research Scholarship of the Hungarian Academy of Sciences. The first author acknowledges the support of OTKA Grant No. K 124950.}}}
\date{}
\newcommand{\F}{{\mathbb F}}
\newcommand{\la}{\langle}
\newcommand{\ra}{\rangle}
\newcommand{\G}{\mathrm{\Gamma L}}
\newtheorem{theorem}{Theorem}[section]
\newtheorem{lemma}[theorem]{Lemma}
\newtheorem{corollary}[theorem]{Corollary}
\newtheorem{proposition}[theorem]{Proposition}
\newtheorem{remark}[theorem]{Remark}
\DeclareMathOperator{\PG}{{PG}}
\DeclareMathOperator{\AG}{{AG}}
\begin{document}
\maketitle

\begin{abstract}
For an arbitrary $q$-polynomial $f$ over $\F_{q^n}$ we study the problem of finding those $q$-polynomials $g$ over $\F_{q^n}$ for which the image sets of $f(x)/x$ and $g(x)/x$ coincide. For $n\leq 5$ we provide sufficient and necessary conditions and then apply our result to study maximum scattered linear sets of $\PG(1,q^5)$.
\end{abstract}



\section{Introduction}

Let $\F_{q^n}$ denote the finite field of $q^n$ elements where $q=p^h$ for some prime $p$. For $n>1$ and $s \mid n$ the trace and norm over $\F_{q^s}$ of elements of $\F_{q^n}$ are defined as $\Tr_{q^n/q^s}(x)=x+x^{q^s}+\ldots+x^{q^{n-s}}$ and $\N_{q^n/q^s}(x)=x^{1+q^s+\ldots+q^{n-s}}$, respectively. When $s=1$ then we will simply write $\Tr(x)$ and $\N(x)$. Every function $f \colon \F_{q^n} \rightarrow \F_{q^n}$ can be given uniquely as a polynomial with coefficients in $\F_{q^n}$ and of degree at most $q^n-1$.
The function $f$ is $\F_q$-linear if and only if it is represented by a \emph{$q$-polynomial}, that is,
\begin{equation}
\label{fff}
f(x)=\sum_{i=0}^{n-1}a_i x^{q^i}
\end{equation}
with coefficients in $\F_{q^n}$. Such polynomials are also called \emph{linearized}.
If $f$ is given as in \eqref{fff}, then its adjoint (w.r.t. the symmetric non-degenerate bilinear form defined by $\la x, y\ra=\Tr(xy)$) is
\[\hat f(x):=\sum_{i=0}^{n-1}a_i^{q^{n-i}} x^{q^{n-i}},\]
i.e. $\Tr(xf(y))=\Tr(y\hat f(x))$ for any $x,y\in\F_{q^n}$.

The aim of this paper is to study what can be said about two $q$-polynomials $f$ and $g$ over $\F_{q^n}$ if they satisfy
\begin{equation}
\label{start}
Im\,\left(\frac{f(x)}{x}\right)=Im\,\left(\frac{g(x)}{x}\right)\ \footnote{By $Im\,(f(x)/x)$ we mean the image of the rational function $f(x)/x$, i.e. $\{f(x)/x \colon x\in \F_{q^n}^*\}$.}.
\end{equation}

For a given $q$-polynomial $f$, the equality \eqref{start} clearly holds with $g(x)=f(\lambda x)/\lambda$ for each $\lambda\in \F_{q^n}^*$.
A less obvious choice when \eqref{start} holds is when $g(x)=\hat f(\lambda x)/\lambda$, see \cite[Lemma 2.6]{BGMP2015} and the first part of \cite[Section 3]{CSMP2016}.

\medskip

When one of the functions in \eqref{start} is a monomial then the answer to the question posed above follows from McConnel's generalization \cite[Theorem 1]{McConnel} of a result due to Carlitz \cite{Carlitz} (see also Bruen and Levinger \cite{BL}).

\begin{theorem}{\cite[Theorem 1]{McConnel}}
\label{Carlitz}
Let $p$ denote a prime, $q=p^h$, and $1<d$ a divisor of $q-1$. Also, let $F \colon \F_{q} \rightarrow \F_{q}$ be a function such that $F(0)=0$ and $F(1)=1$.
Then
\[\left(F(x)-F(y)\right)^{\frac{q-1}{d}}=\left(x-y\right)^{\frac{q-1}{d}}\]
for all $x,y\in \F_{q}$ if and only if $F(x)=x^{p^j}$ for some $0 \leq j < h$ and $d\mid p^j-1$.
\end{theorem}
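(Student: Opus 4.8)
The plan is to settle the easy implication by a direct computation in characteristic $p$ and to reduce the hard one to the theory of directions determined by the graph of a function. For the sufficiency, assume $F(x)=x^{p^j}$ with $d\mid p^j-1$. Then $F(x)-F(y)=(x-y)^{p^j}$, so writing $t=x-y$ it suffices to check $t^{p^j(q-1)/d}=t^{(q-1)/d}$ for every $t\in\F_q$. This is clear for $t=0$, while for $t\neq 0$ the quotient of the two sides is $t^{(p^j-1)(q-1)/d}$, and $d\mid p^j-1$ forces the exponent to be a multiple of $q-1$, so the quotient is $1$. (Note that $F(0)=0$ and $F(1)=1$ hold automatically.)

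For the necessity, set $e=(q-1)/d$ and let $\mu_e=\{w\in\F_q^{*}:w^{e}=1\}$, a group of order $e$ since $e\mid q-1$. First, $F$ is injective: if $F(x)=F(y)$ with $x\neq y$, the left-hand side of the hypothesis vanishes while the right-hand side does not. Hence $F$ is a bijection, and for $x\neq y$ the hypothesis reads exactly $\frac{F(x)-F(y)}{x-y}\in\mu_e$. Equivalently, for every $c\in\F_q\setminus\mu_e$ the map $g_c(x)=F(x)-cx$ is a permutation of $\F_q$, because a coincidence $g_c(x)=g_c(y)$ with $x\neq y$ would place $c$ in $\mu_e$. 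In the language of directions, the graph $\{(x,F(x))\}$ of $F$ determines only directions lying in $\mu_e$, so the number $N$ of determined directions satisfies $N\le e=(q-1)/d\le (q-1)/2$, using $d>1$.

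Since $N<(q+3)/2$, the structure theorem on the number of directions determined by a graph (Rédei; Blokhuis--Ball--Brouwer--Storme--Sz\H onyi) excludes the generic case: either $N=1$, forcing $F$ to be $\F_q$-linear and hence $F=\mathrm{id}=x^{p^0}$ by $F(0)=0$, $F(1)=1$, or $F$ is $\F_s$-linear for some subfield $\F_s\subseteq\F_q$ with $s>1$. In either case $F$ is additive (a $p$-polynomial), and $F(\alpha)=\alpha$ for all $\alpha\in\F_s$ because $F(1)=1$. It then remains to upgrade ``additive with $F(x)/x\in\mu_e$ for all $x\neq 0$'' to ``monomial''. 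For this I would analyse the partition $\F_q^{*}=\bigsqcup_{c\in\mu_e}\bigl(E_c\setminus\{0\}\bigr)$ into the $\F_s$-subspaces $E_c=\ker(F-c\,\mathrm{id})$ indexed by the eigenvalue $c=F(x)/x$, together with the multiplicative group structure of $\mu_e$, to conclude that exactly one coefficient of $F$ survives, say $F(x)=x^{p^j}$; the divisibility $d\mid p^j-1$ then reappears exactly as in the sufficiency, since $x^{(p^j-1)(q-1)/d}=1$ must hold for all $x\in\F_q^{*}$.

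The main obstacle is this last extraction step: the direction theorem yields only additivity ($\F_s$-linearity), whereas the conclusion is a single Frobenius power, which is simultaneously additive and multiplicative, so one must squeeze out the missing multiplicative rigidity — equivalently, show that the subspace partition indexed by $\mu_e$ can arise only from a monomial. A more self-contained alternative, which I would keep in reserve, is to fix a multiplicative character $\chi$ of order $d$, rewrite the hypothesis as $\chi\bigl(F(x)-F(y)\bigr)=\chi(x-y)$ for all $x,y$, and then identify $F$ with an automorphism of the associated cyclotomic (generalized Paley) Cayley graph on $(\F_q,+)$, whose automorphism group is known to consist of semilinear maps; imposing $F(0)=0$ and $F(1)=1$ again isolates $F(x)=x^{p^j}$ with $d\mid p^j-1$.
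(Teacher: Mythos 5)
The paper itself offers no proof of this statement: it is quoted verbatim from McConnel \cite{McConnel} (generalizing Carlitz \cite{Carlitz}) and used as a black box, so the only thing to assess is your argument on its own terms. Your sufficiency direction is correct and complete.

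The necessity direction has a genuine gap at exactly the point you yourself flag as ``the main obstacle''. The reduction via the direction theorem is a nice (if anachronistic) idea and can be made to work: all difference quotients lie in $\mu_e$, so $N\le (q-1)/d\le (q-1)/2$, which excludes the generic case of the R\'edei/Ball--Blokhuis--Brouwer--Storme--Sz\H{o}nyi theorem; and in the remaining subfield case the lower bound $q/s+1\le N\le (q-1)/2$ forces $s>2$, so the ``moreover'' clause of that theorem really does give $\F_s$-linearity --- a point you should make explicit, since for $s=2$ the theorem does not yield additivity. But after that you are left with: $F$ additive and $F(x)^{(q-1)/d}=x^{(q-1)/d}$ for all $x$, conclude that $F$ is a single Frobenius power. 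You do not prove this; you only describe the partition of $\F_q^*$ into the eigenspaces $E_c=\ker(F-c\,\mathrm{id})$, $c\in\mu_e$, and assert that ``exactly one coefficient survives''. That step is the entire analytic content of the theorem in the additive case --- it is precisely the statement the paper extracts \emph{from} McConnel's theorem as Corollary \ref{norm} (an additive map all of whose difference quotients have norm $1$ is a Frobenius monomial) --- so you have reduced the hard implication to a statement of essentially the same depth without proving it. The eigenspace partition by itself cannot suffice; one must inject some multiplicative rigidity, e.g.\ the Bruen--Levinger analysis of the group generated by $F$ and the multiplications by $d$-th powers, or McConnel's character-sum argument. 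Your reserve plan --- quoting that automorphisms of generalized Paley graphs are semilinear --- is circular, since that determination is historically a consequence of (indeed essentially equivalent to) the Carlitz--McConnel theorem.
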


Indeed, when the function $F$ of Theorem \ref{Carlitz} is $\F_q$-linear, we easily get the following corollary (see Section \ref{BPR} for the proof, or   \cite[Corollary 1.4]{Praha} for the case when $q$ is an odd prime).

\begin{corollary}
\label{norm}
Let $g(x)$ and $f(x)=\alpha x^{q^k}$, $q=p^h$, be $q$-polynomials over $\F_{q^n}$ such that
\begin{equation}\label{form:im}
Im\,\left(\frac{f(x)}{x}\right)=Im\,\left(\frac{g(x)}{x}\right).
\end{equation}
Denote $\gcd(k,n)$ by $t$. Then $g(x)=\beta x^{q^s}$ with $\gcd(s,n)=t$ for some $\beta$ with $\N_{q^n/q^t}(\alpha)=\N_{q^n/q^t}(\beta)$. 
\end{corollary}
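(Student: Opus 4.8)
The plan is to reduce the statement to McConnel's theorem (Theorem \ref{Carlitz}), applied over the field $\F_{q^n}$, exploiting the additivity of $q$-polynomials to turn the equality of image sets into a Carlitz-type difference identity. First I would describe the image set explicitly. Since $f(x)/x=\alpha x^{q^k-1}$ and $\gcd(q^k-1,q^n-1)=q^t-1$, the values $x^{q^k-1}$ fill out the unique subgroup $H$ of $\F_{q^n}^*$ of order $d':=(q^n-1)/(q^t-1)$, which is exactly $\ker\N_{q^n/q^t}$ (recall that $z^{d'}=\N_{q^n/q^t}(z)\in\Fqt$). Hence $Im(f(x)/x)=\alpha H$, and by hypothesis every value $g(x)/x$ lies in $\alpha H$; in particular $g$ is injective, so $g(1)\neq 0$, and $g(1)=g(1)/1\in\alpha H$ already yields $\N_{q^n/q^t}(g(1))=\N_{q^n/q^t}(\alpha)$.

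The key step is the reduction to a difference identity. Using $g(x)-g(y)=g(x-y)$, the quotient $(g(x)-g(y))/(x-y)=g(x-y)/(x-y)$ again lies in $\alpha H$ for all $x\neq y$, so raising to the power $d'=|H|$ annihilates the $H$-part and gives $(g(x)-g(y))^{d'}=\N_{q^n/q^t}(\alpha)\,(x-y)^{d'}$. Setting $F(x)=g(x)/g(1)$ and using $g(1)^{d'}=\N_{q^n/q^t}(g(1))=\N_{q^n/q^t}(\alpha)$, one checks that $F(0)=0$, $F(1)=1$ and $(F(x)-F(y))^{d'}=(x-y)^{d'}$ for all $x,y\in\F_{q^n}$. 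Now I would apply Theorem \ref{Carlitz} over $\F_{q^n}$ with the divisor $d=q^t-1$, so that the theorem's exponent $(q^n-1)/d$ is precisely $d'$: this forces $F(x)=x^{p^j}$ for some $j$. Since $F$ is $\F_q$-linear, the power map $x\mapsto x^{p^j}$ must be a $q$-polynomial, i.e. $p^j=q^s$; therefore $g(x)=\beta x^{q^s}$ with $\beta=g(1)$.

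Pinning down the parameters is then routine. The norm condition is immediate from $\beta=g(1)\in\alpha H$, giving $\N_{q^n/q^t}(\beta)=\N_{q^n/q^t}(\alpha)$. For the index, I would compare $Im(g(x)/x)=\beta\,\ker\N_{q^n/q^{\gcd(s,n)}}$ with $\alpha\,\ker\N_{q^n/q^t}$: the equality of these two cosets forces the underlying subgroups, hence their orders, to coincide, and this yields $\gcd(s,n)=t$.

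The step I expect to be the main obstacle is the degenerate case $q^t-1=1$, that is $q=2$ and $t=1$, where $H=\F_{q^n}^*$, the image is all of $\F_{q^n}^*$, and Theorem \ref{Carlitz} is unavailable because it requires $1<d$; this boundary case must be argued separately. A secondary delicate point is the passage from McConnel's exponent $p^j$ to a genuine power $q^s$ of $q$, which crucially uses the $\F_q$-linearity of $g$ rather than mere additivity over the prime field.
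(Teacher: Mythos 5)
Your proof follows essentially the same route as the paper: normalize to $F(x)=g(x)/g(1)$, feed the resulting Carlitz-type identity into Theorem \ref{Carlitz} over $\F_{q^n}$, and use $\F_q$-linearity to force $p^j=q^s$; the only organizational differences are that the paper first reduces to the case $t=1$ via Proposition \ref{fieldoflinearity} and then applies the theorem with $d=q-1$, whereas you keep general $t$ and take $d=q^t-1$, and you pin down $\gcd(s,n)=t$ by comparing subgroup orders rather than by invoking Proposition \ref{fieldoflinearity} again. The degenerate case you flag ($q=2$, $t=1$, where Theorem \ref{Carlitz} requires $d>1$) is a genuine boundary issue, but the paper's own proof applies the theorem with $d=q-1$ without comment and so leaves exactly the same case untreated; you are not missing any argument that the authors actually supply.
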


\noindent Another case for which we know a complete answer to our problem is when $f(x)=\Tr(x)$.

\begin{theorem}[{\cite[Theorem 3.7]{CSMP2016}}]
\label{trace}
Let $f(x)=\Tr(x)$ and let $g(x)$ be a $q$-polynomial over $\F_{q^n}$ such that
\[Im\,(f(x)/x)=Im \,(g(x)/x).\]
Then $g(x)=\Tr(\lambda x)/\lambda$ for some $\lambda \in \F_{q^n}^*$.
\end{theorem}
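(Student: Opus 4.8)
The plan is to translate the condition into the language of $\F_q$-linear sets on the projective line $\PG(1,q^n)$. For a $q$-polynomial $h$ write $U_h=\{(x,h(x))\colon x\in\F_{q^n}\}$, an $n$-dimensional $\F_q$-subspace of $\F_{q^n}^2$, with associated linear set $L_h=\{\langle(x,h(x))\rangle_{\F_{q^n}}\colon x\neq 0\}$ of rank $n$. Since $\langle(x,h(x))\rangle=\langle(1,h(x)/x)\rangle$ for $x\neq 0$, the hypothesis $\mathrm{Im}(f(x)/x)=\mathrm{Im}(g(x)/x)$ says exactly that $L_f=L_g$ as point sets. First I would compute the image for $f(x)=\Tr(x)$: for $y\neq 0$ one has $y=\Tr(x)/x$ for some $x$ if and only if, putting $c=\Tr(x)=yx\in\F_q^*$ and $x=c/y$, the identity $\Tr(c/y)=c$ holds, i.e. $\Tr(1/y)=1$; together with $0$ (attained on $\ker\Tr$) this gives $S:=\mathrm{Im}(\Tr(x)/x)=\{0\}\cup\{1/z\colon\Tr(z)=1\}$, a set of size $q^{n-1}+1$. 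Thus $L_g$ is a linear set of rank $n$ with exactly $q^{n-1}+1$ points.

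Since $q^{n-1}+1$ is the minimum possible number of points of an $\F_q$-linear set of rank $n$ on a projective line, and this minimum is attained precisely by \emph{clubs} (linear sets with one point of weight $n-1$ and all other points of weight $1$), $L_g$ must be a club. Writing $d_y=\dim_{\F_q}\ker(g-y\cdot\mathrm{id})$ for the weight of $\langle(1,y)\rangle$, exactly one $y\in S$ satisfies $d_y=n-1$ and the rest satisfy $d_y=1$; in particular $d_0=\dim_{\F_q}\ker g\in\{1,n-1\}$. The whole proof hinges on showing the head of the club sits at $\langle(1,0)\rangle$, i.e. $\dim_{\F_q}\ker g=n-1$, and this is the main obstacle. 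I would settle it by applying the projectivity $\iota\colon y\mapsto 1/y$, which carries $S$ to $\iota(S)=\{z\colon\Tr(z)=1\}\cup\{\infty\}$, an affine hyperplane together with the point at infinity. This set is a club with head $\infty$, via the representation $U'=\{(\Tr(w),w)\colon w\in\F_{q^n}\}$, whose intersection with $\langle(0,1)\rangle$ is $\ker\Tr$ of dimension $n-1$. The point to verify is that it admits \emph{no other} club structure: a club containing $\infty$ as an ordinary point and with a finite head would, after a translation moving the head to the origin, force the reciprocal of a linear hyperplane to be a linear hyperplane, which fails for $n\geq 3$ by a direct closure/counting argument (the cases $n\leq 2$ being immediate). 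Hence every club representation of $\iota(S)$, in particular $\iota(L_g)$, has head $\infty$, so the head of $L_g$ is $\iota^{-1}(\infty)=\langle(1,0)\rangle$ and $\dim_{\F_q}\ker g=n-1$.

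Once this is established the rest is short. From $\dim_{\F_q}\ker g=n-1$ we get $\dim_{\F_q}\mathrm{Im}\,g=1$, so $\mathrm{Im}\,g=\F_q\mu$ for some $\mu\in\F_{q^n}^*$ and $g(x)=\mu\,\ell(x)$ with $\ell\colon\F_{q^n}\to\F_q$ a nonzero $\F_q$-linear map. Every such $\ell$ has the form $\ell(x)=\Tr(\delta x)$ for a unique $\delta\in\F_{q^n}^*$, so $g(x)=\mu\Tr(\delta x)$. Substituting $u=\delta x$ yields $\mathrm{Im}(g(x)/x)=\mu\delta\,S$, so the hypothesis becomes $aS=S$ with $a:=\mu\delta$. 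Since $S\setminus\{0\}=\{1/z\colon\Tr(z)=1\}$, the equality $aS=S$ is equivalent to $\{w\colon\Tr(aw)=1\}=\{w\colon\Tr(w)=1\}$; two affine hyperplanes $\{\Tr(aw)=1\}$ and $\{\Tr(w)=1\}$ coincide only when the corresponding linear forms do, so non-degeneracy of the trace forces $a=1$, i.e. $\mu\delta=1$. Therefore $g(x)=\mu\Tr(\delta x)=\Tr(\delta x)/\delta$, which is the claim with $\lambda=\delta$.
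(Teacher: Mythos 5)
Your reduction to $\dim_{\F_q}\ker g=n-1$ is where the proof breaks. The computation of $S=Im\,(\Tr(x)/x)=\{0\}\cup\{1/z\colon \Tr(z)=1\}$ is correct, and the endgame is also fine: once $\dim_{\F_q}\ker g=n-1$ is known, writing $g(x)=\mu\Tr(\delta x)$ and deducing $\mu\delta=1$ from $aS=S$ via non-degeneracy of the trace is a complete argument. The gap is the assertion that a linear set of rank $n$ in $\PG(1,q^n)$ of minimum size $q^{n-1}+1$ is \emph{precisely} a club, i.e.\ that $U_g=\{(x,g(x))\colon x\in\F_{q^n}\}$ must contain a point of weight $n-1$. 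This is not a quotable theorem and you give no argument for it. The weight numerology does not force it: with $w_P$ the weights one only gets $\sum_P(q^{w_P}-q)=q^{n-1}-q$ over $q^{n-1}+1$ points, which admits distributions other than the club one (for instance $(q^{n-2}-1)/(q-1)$ points of weight $2$ and the rest of weight $1$). No characterization of minimum-size linear sets of rank $n$ as clubs exists in the literature --- indeed the classification of $q$-polynomials with $|Im\,(f(x)/x)|=q^{n-1}+1$ is posed as an open problem at the end of this very paper --- so your whole argument funnels through an unproved (and, for the general statement you invoke, dubious) claim. A secondary, smaller gap: the assertion that $\{1/k\colon k\in\ker\Tr,\ k\neq 0\}\cup\{0\}$ is not an $\F_q$-hyperplane is true but is waved through as ``a direct closure/counting argument'' rather than proved, and your head-location step depends on it.

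For comparison, the paper does not reprove Theorem \ref{trace}; it imports it from \cite[Theorem 3.7]{CSMP2016}, where the proof is by the power-sum method used in Section \ref{section-proof} for Proposition \ref{prop}: from $\sum_{x}(f(x)/x)^d=\sum_{x}(g(x)/x)^d$ for all $d$ (Lemma \ref{lmain}) together with Lemma \ref{lfolk} one extracts identities among the coefficients of $g$ that force $g(x)=\Tr(\lambda x)/\lambda$ directly, with no appeal to any structural classification of minimum-size linear sets. To salvage your geometric route you would need to prove that this particular point set $S$ admits only club representations with head at $\la(1,0)\ra$; as it stands, that is essentially the content of the theorem itself.
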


\noindent Note that in Theorem \ref{trace} we have $\hat f(x) = f(x)$ and the only solutions for $g$ are $g(x)=f(\lambda x)/\lambda$, while in Corollary \ref{norm} we have (up to scalars) $\varphi(n)$ different solutions for $g$, where $\varphi$ is the Euler's totient function.

The problem posed in \eqref{start} is also related to the study of the directions determined by an additive function. Indeed, when $f$ is additive, then
\[Im\,(f(x)/x)=\left\{\frac{f(x)-f(y)}{x-y} \colon x\neq y,\, x,y\in \F_{q^n}\right\},\]
is the \emph{set of directions} determined by the graph of $f$, i.e. by the point set ${\mathcal G}_f:=\{(x,f(x)) \colon x\in \F_{q^n}\}\subset \AG(2,q^n)$. Hence, in this setting, the problem posed in \eqref{start} corresponds to finding the $\F_q$-linear functions whose graph determines the same set of directions. 
The maximum cardinality of the image set of $f(x)/x$, where $f$ is a $q$-polynomial
over $\F_{q^n}$, is $(q^n-1)/(q-1)$, whereas, if $\F_q$ is the maximum field of linearity of $f$, then by \cite{BBBSSz, B2003, Cs2017} $Im\,(f(x)/x)$ contains at least $q^{n-1}+1$ elements, hence
\begin{equation}
\label{directions}
q^{n-1}+1\leq |Im\,(f(x)/x)| \leq \frac{q^{n}-1}{q-1}.
\end{equation}

The classical examples which show the sharpness of these bounds are the monomial functions $x^{q^s}$, with $\gcd (s,n)=1$, and the $\Tr(x)$ function.
However, these bounds are also achieved by other polynomials which are not "equivalent" to these examples (see Section 2 for more details).
\bigskip

Two $\F_q$-linear polynomials $f(x)$ and $h(x)$ of $\F_{q^n}[x]$ are \emph{equivalent} if the two graphs ${\mathcal G}_f$ and ${\mathcal G}_h$ are equivalent under tha action of the group $\Gamma {\mathrm L}(2,q^n)$, i.e. if there exists an element $\varphi\in\Gamma {\mathrm L}(2,q^n)$ such that ${\mathcal G}_f^\varphi={\mathcal G}_h$. In such a case, we say that $f$ and $h$ are \emph{equivalent} (via $\varphi$) and we write $h=f_\varphi$. It is easy to see that in this way we defined an equivalence relation on the set of $q$-polynomials over $\F_{q^n}$. If $f$ and $g$ are two $q$-polynomials such that $Im\,(f(x)/x)=Im\,(g(x)/x)$, then $Im\,(f_\varphi(x)/x)=Im\,(g_\varphi(x)/x)$ for any admissible $\varphi\in\G(2,q^n)$ (see Proposition \eqref{imtrans}). This means that the problem posed in \eqref{start} can be investigated up to equivalence.

\bigskip

For $n\leq 4$, the only solutions for $g$ in Problem \eqref{start} are the trivial ones, i.e. either $g(x)=f(\lambda x)/x$ or $g(x)=\hat f(\lambda x)/x$ (cf. Theorem \ref{n=4}).

For the case $n=5$, in Section \ref{section-proof}, we prove the following main result.

\begin{theorem}
\label{main}
Let $f(x)$ and $g(x)$ be two $q$-polynomials over $\F_{q^5}$, with maximum field of linearity $\F_q$, such that $Im\,(f(x)/x)=Im\,(g(x)/x)$.
Then either there exists $\varphi\in \G(2,q^5)$ such that $f_\varphi(x)=\alpha x^{q^i}$ and $g_\varphi(x)=\beta x^{q^j}$ with $\N(\alpha)=\N(\beta)$ for some
$i,j\in \{1,2,3,4\}$, or there exists $\lambda\in \F_{q^5}^*$ such that $g(x)=f(\lambda x)/\lambda$ or $g(x)=\hat{f}(\lambda x)/\lambda$.
\end{theorem}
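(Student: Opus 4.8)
The plan is to translate the hypothesis into a statement about linear sets and then into an identity between two explicit univariate polynomials. Viewing $\F_{q^5}\times\F_{q^5}$ as a rank-$2$ space over $\F_{q^5}$, the graph $\cG_f$ is an $\F_q$-subspace of rank $5$ whose associated linear set $L_f\subseteq\PG(1,q^5)$ has points $\la(1,m)\ra$ with $m\in Im\,(f(x)/x)$; thus the hypothesis $Im\,(f(x)/x)=Im\,(g(x)/x)$ is equivalent to $L_f=L_g$. Since the problem is invariant under $\G(2,q^5)$-equivalence (as recalled in the introduction), I may normalise $f$ up to equivalence. The first key step is the observation that $m\in Im\,(f(x)/x)$ precisely when the $\F_q$-linear map $f-m\cdot\mathrm{id}$ is singular: writing $f(x)=\sum_{i=0}^4 a_ix^{q^i}$ and applying the Frobenius maps $x\mapsto x^{q^j}$ to $f(x)=mx$, one gets a homogeneous linear system in $x,x^q,\dots,x^{q^4}$ whose matrix is the Dickson (autocirculant) matrix $M_f$ of $f$ with $m^{q^j}$ subtracted along the diagonal. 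Hence $Im\,(f(x)/x)$ is exactly the zero set in $\F_{q^5}$ of
\[
D_f(m):=\det\!\big(M_f-\mathrm{diag}(m,m^{q},\dots,m^{q^4})\big),
\]
which takes values in $\F_q$, is multilinear in $m,m^q,\dots,m^{q^4}$, has degree $(q^5-1)/(q-1)$ and leading coefficient $-1$, and whose coefficients $c_S(f)$ (indexed by subsets $S\subseteq\{0,1,2,3,4\}$) are explicit polynomial expressions in the $a_i$ and their conjugates.

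The second step splits according to whether $L_f$ is scattered. If $f$ is scattered then $|Im\,(f(x)/x)|=(q^5-1)/(q-1)=\deg D_f$, so $D_f$ factors into distinct linear factors over $\F_{q^5}$; the same holds for $g$, and since $D_f$ and $D_g$ share all their (simple) roots and have equal degree and leading coefficient, they coincide as polynomials. Comparing $c_S(f)=c_S(g)$ for all $S$ then yields a system relating the coefficient vectors of $f$ and $g$. I expect the low-order relations (the cases $|S|=0,1$ and, dually, $|S|=4,5$, i.e. the $\N$-type and $\Tr$-type combinations) to be decisive: they should force the coefficients of $g$ to be, up to an $\F_{q^5}^*$-scalar $\lambda$, those of $f(\lambda x)/\lambda$ or of $\hat f(\lambda x)/\lambda$, unless enough of these relations degenerate, which should happen exactly when $f$ is equivalent to a monomial. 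In that exceptional situation I would invoke Corollary \ref{norm} to land in the first alternative of the statement, with possibly distinct exponents $i,j\in\{1,2,3,4\}$ but with $\N(\alpha)=\N(\beta)$; here the primality of $5$ is what guarantees $\gcd(i,5)=1$ and hence the full norm $\N=\N_{q^5/q}$.

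The third step treats the non-scattered case. Now $|Im\,(f(x)/x)|<(q^5-1)/(q-1)$, so $D_f$ has repeated roots or roots outside $\F_{q^5}$; I would control the root multiplicities in terms of the weights $w(P)=\dim_{\F_q}\ker(f-m\cdot\mathrm{id})$ of the points $P=\la(1,m)\ra$ of $L_f$, and use the lower bound $|Im\,(f(x)/x)|\ge q^{4}+1$ from \eqref{directions} together with the sum rule $\sum_{P\in L_f}(q^{w(P)}-1)=q^5-1$ to restrict the admissible weight distributions; the primality of $5$ is again essential, since it rules out intermediate-field linearity and keeps this list short. As monomials are scattered, a non-scattered $f$ cannot be equivalent to a monomial, so the target reduces to showing that only the two trivial families survive, which I would obtain by comparing $D_f$ and $D_g$ as polynomials with matching root multiplicities.

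The main obstacle is the coefficient comparison in the second and third steps: converting the equalities $c_S(f)=c_S(g)$ — a genuinely nonlinear system in the $a_i$ and their $q$-th powers — into the clean trichotomy of the statement. In particular, ruling out any \emph{further} solution $g$ beyond the scalings of $f$ and of $\hat f$ (away from the monomial locus) amounts to showing that the fibres of the map sending a $q$-polynomial to its tuple $(c_S)_S$ are no larger than the orbit under $\lambda$-scaling and adjunction. This is precisely where the argument is special to $n=5$, where the bulk of the casework lies, and where an extension to larger $n$ would require new ideas.
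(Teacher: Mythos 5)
Your reduction of the hypothesis to $L_f=L_g$ and your use of the Dickson-matrix determinant $D_f(m)$ are sound, and in the scattered case the conclusion $D_f=D_g$ is correct since both polynomials split into distinct linear factors over $\F_{q^5}$ with the same degree and leading coefficient. This is close in spirit to what the paper actually does, except that the paper extracts its coefficient identities from power sums $\sum_{x\in\F_{q^5}^*}(f(x)/x)^d=\sum_{x\in\F_{q^5}^*}(g(x)/x)^d$ (Lemmas \ref{lmain} and \ref{lfolk}), which has the advantage of being insensitive to weights: each image element is counted with multiplicity $q^{w}-1\equiv -1 \pmod p$, so the identities hold for scattered and non-scattered $f$ alike. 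This points to the first concrete gap in your plan: in the non-scattered case you need $D_f$ and $D_g$ to agree \emph{with multiplicities}, i.e.\ you need the weight distributions of $L_f$ computed via $\cG_f$ and via $\cG_g$ to coincide pointwise. The hypothesis $L_f=L_g$ only gives equality of point sets, and the sum rule $\sum_P(q^{w(P)}-1)=q^5-1$ constrains but does not determine the weights; you give no argument that the multiplicities match, so your non-scattered case does not go through as stated.

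The second and larger gap is that the heart of the theorem is precisely the step you defer: solving the nonlinear system $c_S(f)=c_S(g)$ (equivalently, the paper's Equations \eqref{e0}--\eqref{e6}). You write that you ``expect the low-order relations to be decisive'' and that degeneration ``should happen exactly when $f$ is equivalent to a monomial,'' but this is where essentially all of the paper's work lies: a case analysis on how many of $a_1,\dots,a_4$ vanish, and, when none vanish, a four-way subdivision whose hardest branch does \emph{not} immediately yield the trivial solutions but instead lands on explicit coefficient conditions ($b_3=b_2^{q+1}/(b_1^q\lambda)$, $b_4=b_2^{1+q+q^2}/(b_1^{q+q^2}\lambda)$, etc.). Recognizing that these conditions mean $g$ is $\G(2,q^5)$-equivalent to a monomial or to $\Tr$ requires the separate characterizations in Propositions \ref{trace5} and \ref{pseudoalg}, i.e.\ explicit criteria on $(a_0,\dots,a_4)$ with all $a_i\neq 0$ for $f_\varphi$ to be $x^{q^i}$ or $\Tr(\lambda x)/\lambda$. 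Your proposal does not anticipate that the monomial alternative arises from polynomials with \emph{no} vanishing coefficients, nor does it supply the detection criterion; ``invoking Corollary \ref{norm}'' only applies after one has already produced the equivalence $\varphi$. As it stands the proposal is a reasonable strategy outline whose decisive steps are asserted rather than proved.
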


Finally, the relation between $Im\,(f(x)/x)$ and the linear sets of rank $n$ of the projective line $\PG(1,q^n)$ will be pointed out in Section \ref{geom}.
As an application of Theorem \ref{main} we get a criteria of $\mathrm{P}\Gamma\mathrm{L}(2,q^5)$-equivalence for linear sets in $\PG(1,q^5)$ and this allows us to prove that the family of (maximum scattered) linear sets of rank $n$ of size $(q^n-1)/(q-1)$ in $\PG(1,q^n)$ found by Sheekey in \cite{Sh} contains members which are not-equivalent to the previously known linear sets of this size.

\section{Background and preliminary results}
\label{BPR}

Let us start this section by the following immediate corollary of \eqref{directions}.

\begin{proposition}
	\label{fieldoflinearity}
	If $Im\,(f(x)/x)=Im\,(g(x)/x)$ for two $q$-polynomials $f$ and $g$ over $\F_{q^n}$, then their maximum field of linearity coincide.
\end{proposition}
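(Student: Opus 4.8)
The plan is to show that the cardinality of the common image set already determines the maximum field of linearity; the statement is then immediate since $|Im\,(f(x)/x)|=|Im\,(g(x)/x)|$. Write $S:=Im\,(f(x)/x)=Im\,(g(x)/x)$, and let $\F_{q^{s_f}}$ and $\F_{q^{s_g}}$ be the maximum fields of linearity of $f$ and $g$, where $s_f,s_g$ are divisors of $n$ (recall that the set of $\lambda$ with $f(\lambda x)=\lambda f(x)$ for all $x$ is a subfield of $\F_{q^n}$ containing $\F_q$, so this is well defined). I must prove $s_f=s_g$.

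First I would translate the bound \eqref{directions} to an arbitrary field of linearity. If the maximum field of linearity of $f$ is $\F_{q^s}$ with $s<n$, then $f$ is $\F_{q^s}$-linear and so can be viewed as a $Q$-polynomial over $\F_{Q^m}$ with $Q=q^s$ and $m=n/s$, having maximum field of linearity exactly $\F_Q$. Applying \eqref{directions} with $q$ replaced by $Q$ gives
$$q^{n-s}+1\ \le\ |S|\ \le\ \frac{q^n-1}{q^s-1},$$
so $|S|$ lies in the interval $I_s:=[\,q^{n-s}+1,\ (q^n-1)/(q^s-1)\,]$. The remaining divisor $s=n$ corresponds to $f(x)=\alpha x$, for which $|S|=1$; since $q^{n-s}+1\ge q+1>1$ for every proper divisor $s$, the value $|S|=1$ occurs precisely when the field of linearity is all of $\F_{q^n}$.

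The key step is to check that the ranges for distinct divisors are disjoint. For proper divisors $s<s'$ of $n$ I would show that $I_{s'}$ lies entirely below $I_s$, i.e. $(q^n-1)/(q^{s'}-1)<q^{n-s}+1$. Clearing denominators, this is equivalent to $q^n+q^{n-s}<q^{\,n-s+s'}+q^{s'}$, which holds because $s'\ge s+1$ forces $q^{\,n-s+s'}\ge q^{\,n+1}\ge 2q^n>q^n+q^{n-s}$ (using $q\ge 2$ and $s\ge 1$). Hence the intervals $I_s$, together with the isolated value $1$ attached to $s=n$, are pairwise disjoint, and $|S|$ falls into exactly one of them. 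Since $|S|\in I_{s_f}$ and $|S|\in I_{s_g}$, disjointness yields $s_f=s_g$, that is, the maximum fields of linearity of $f$ and $g$ coincide.

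I expect the only subtle point to be the reinterpretation in the first step: verifying that an $\F_{q^s}$-linear self-map of $\F_{q^n}$ really is a $q^s$-polynomial whose maximum field of linearity is precisely $\F_{q^s}$, so that \eqref{directions} applies verbatim over $\F_{q^s}$. Once this is granted the argument is purely the elementary interval estimate above, so the disjointness inequality is the main—though entirely routine—obstacle.
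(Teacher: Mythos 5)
Your proof is correct and follows essentially the same route as the paper: both apply the bounds \eqref{directions} relative to each function's maximum field of linearity and then observe that the inequality $(q^n-1)/(q^{s'}-1)<q^{n-s}+1$ for $s<s'$ makes the resulting cardinality ranges disjoint. The paper phrases this as a direct chain of inequalities yielding a contradiction rather than as disjointness of intervals, but the content is identical.
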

\begin{proof}
	Let $\F_{q^m}$ and $\F_{q^k}$ be the maximum fields of linearity of $f$ and $g$, respectively. Suppose to the contrary $m<k$.
	Then $|Im\,(g(x)/x)|\leq (q^n-1)/(q^k-1)<q^{n-k+1}+1\leq q^{n-m}+1 \leq |Im\,(f(x)/x)|$, a contradiction.
\end{proof}

Now we are able to prove Corollary \ref{norm}.

\begin{proof}
The maximum field of linearity of $f(x)$ is $\F_{q^t}$, thus, by Proposition \ref{fieldoflinearity}, $g(x)$ has to be a $q^t$-polynomial as well. 
Then for $t>1$ the result follows from the $t=1$ case (after substituting $q$ for $q^t$ and $n/t$ for $n$) and hence we can assume that $f(x)$ and $g(x)$ are strictly $\F_q$-linear. By \eqref{form:im}, we note that $g(1)=\alpha z_0^{q^k-1}$, for some $z_0\in\F_{q^n}^*$. Let $F(x):=g(x)/g(1)$, then $F$ is a $q$-polynomial over $\F_{q^n}$, with $F(0)=0$ and $F(1)=1$. Also, from \eqref{form:im}, for each $x\in\F_{q^n}^*$ there exists $z\in\F_{q^n}^*$ such that
\[\frac{F(x)}{x}=\left(\frac{z}{z_0}\right)^{q^k-1}.\] This means that for each $x \in\F_{q^n}^*$ we get $\N\left(\frac{F(x)}{x}\right)=1$. From Theorem \ref{Carlitz} (with $d=q-1$) it follows that $F(x)=x^{p^j}$ for some $0\leq j<nh$. 
Then Proposition \ref{fieldoflinearity} yields $p^j=q^s$ with $\gcd(s,n)=1$. We get the first part of the statement by putting $\beta:=f(1)$. Again from \eqref{form:im} it follows that $\N(\alpha)=\N(\beta)$.
\end{proof}



Let  $f$ and $g$ be two equivalent $q$-polynomials over $\F_{q^n}$ via the element  $\varphi\in\Gamma{\mathrm L}(2,q^n)$ represented by the invertible matrix $\begin{pmatrix}
a&b\\
c&d\\
\end{pmatrix}$ and with companion automorphism $\sigma$ of $\F_{q^n}$. Then
\begin{equation}\label{form:equiv}
\left\{
\begin{pmatrix}
x\\
g(x)
\end{pmatrix}
\colon x\in \F_{q^n}
\right\}=
\left\{
\begin{pmatrix}
a&b\\
c&d\\
\end{pmatrix}
\begin{pmatrix}
x^\sigma\\
f(x)^\sigma
\end{pmatrix}
\colon x \in \F_{q^n}\right\}.
\end{equation}
By \eqref{form:equiv} we get that the function $k_f(x):=ax^\sigma+bf(x)^\sigma$ is invertible and, with $h_f(x):=cx^\sigma+df(x)^\sigma$, it follows that  $g(x)=h_f(k_f^{-1}(x))$. Hence, we have proved the following proposition.

\begin{proposition}\label{fvarphi}
Let $f$ and $g$ be $q$-polynomials over $\F_{q^n}$ such that $g=f_\varphi$ for some $\varphi\in\G(2,q^n)$ with linear part represented by $\left(
\begin{array}{cc}
a & b \\
c & d \\
\end{array}
\right)$ and with companion automorphism $\sigma$. Let $k_f(x)=ax^\sigma+bf(x)^\sigma$ and
$h_f(x)=cx^\sigma+df(x)^\sigma$, then $k_f$ is invertible and $g(x)=f_\varphi(x)=h_f(k_f^{-1}(x))$.
\end{proposition}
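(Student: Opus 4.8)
The plan is to extract $g$ directly from the hypothesis $g=f_\varphi$, which by the definition of the equivalence relation given earlier means precisely that $\cG_g=\cG_f^\varphi$, i.e. the set equality \eqref{form:equiv}. First I would spell out how the semilinear element $\varphi$ acts on an affine point: it applies its companion automorphism $\sigma$ to the coordinates and then the linear part, so the image of a point $(x,f(x))$ of $\cG_f$ is
\[
\begin{pmatrix} a & b \\ c & d \end{pmatrix}\begin{pmatrix} x^\sigma \\ f(x)^\sigma \end{pmatrix}=\begin{pmatrix} k_f(x) \\ h_f(x) \end{pmatrix},
\]
with $k_f,h_f$ as in the statement. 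Letting $x$ range over $\F_{q^n}$ then exhibits $\cG_g$ as the parametrized point set $\{(k_f(x),h_f(x))\colon x\in\F_{q^n}\}$.

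The crux of the argument, and the only step requiring care, is to show that $k_f$ is invertible. Here I would argue by counting: since $g$ is by hypothesis a function defined on all of $\F_{q^n}$, its graph $\cG_g$ consists of exactly $q^n$ points whose first coordinates exhaust $\F_{q^n}$. Comparing with the parametrization above, the first coordinates occurring are precisely the values $k_f(x)$, so $k_f$ must be surjective; being an additive (indeed $\sigma$-semilinear) self-map of the finite set $\F_{q^n}$, surjectivity forces bijectivity, hence invertibility. I would flag this as the point to get right, since the tempting alternative of computing $\ker k_f$ directly would drag in the interaction of the nonsingular matrix with $f$. A cleaner alternative justification, worth recording, uses only that $\varphi$ is a bijection of the affine plane: if $k_f(x_1)=k_f(x_2)$, then the two image points lie in the graph $\cG_g$ and share a first coordinate, so they coincide; injectivity of $\varphi$ then gives $x_1=x_2$, whence $k_f$ is injective.

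With invertibility in hand the formula is immediate. Given $y\in\F_{q^n}$, set $x=k_f^{-1}(y)$; then $(y,h_f(x))$ is the unique point of $\cG_g$ with first coordinate $y$, so $g(y)=h_f(x)=h_f(k_f^{-1}(y))$. This yields $g=f_\varphi=h_f\circ k_f^{-1}$, completing the proof. I do not anticipate any genuine obstacle beyond the invertibility of $k_f$, which the surjectivity argument settles without computation.
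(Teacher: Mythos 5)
Your proposal is correct and follows essentially the same route as the paper: the paper derives the set equality \eqref{form:equiv}, reads off that $k_f$ must be invertible (which you justify, quite reasonably, by the surjectivity-implies-bijectivity counting argument on the first coordinates of $\cG_g$), and concludes $g=h_f\circ k_f^{-1}$. Your write-up merely makes explicit the invertibility step that the paper leaves implicit.
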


From \eqref{form:equiv} it is also clear that
\begin{equation}
\label{map}
Im\,\left(\frac{f_\varphi(x)}x\right)=\left\{\frac{c+dz^\sigma}{a+bz^\sigma}
\colon z \in Im\,\left(\frac{f(x)}x\right)\right\}
\end{equation}
and hence 
\begin{equation}
\label{size}
|Im\,(f_\varphi(x)/x)|=|Im\,(f(x)/x)|.
\end{equation}

From Equations \eqref{size} and \eqref{directions} the next result easily follows.
\begin{proposition}
\label{prop:lin}
	If two $q$-polynomials over $\F_{q^n}$ are equivalent, then their maximum field of linearity coincide.\qed
\end{proposition}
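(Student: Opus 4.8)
The plan is to reduce this to a cardinality count, exactly as in the proof of Proposition \ref{fieldoflinearity}, the sole difference being that the equality of image-set sizes now comes from equivalence rather than from a common image. Concretely, let $\F_{q^m}$ and $\F_{q^k}$ denote the maximum fields of linearity of $f$ and $g$, respectively. Since $f$ and $g$ are equivalent, we have $g=f_\varphi$ for some $\varphi\in\G(2,q^n)$, so Equation \eqref{size} gives $|Im(f(x)/x)|=|Im(g(x)/x)|$. I would then argue by contradiction, assuming without loss of generality that $m<k$.

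The heart of the argument is the chain
\[
|Im(g(x)/x)|\leq \frac{q^n-1}{q^k-1} < q^{n-k+1}+1 \leq q^{n-m}+1\leq |Im(f(x)/x)|,
\]
which contradicts the equality $|Im(f(x)/x)|=|Im(g(x)/x)|$ forced by \eqref{size}. Here the two outer inequalities are the bounds from \eqref{directions}, the middle strict inequality follows from $\frac{q^n-1}{q^k-1}=q^{n-k}+q^{n-2k}+\cdots+1<q^{n-k+1}$, and the remaining inequality $q^{n-k+1}+1\leq q^{n-m}+1$ is just the assumption $m<k$ rewritten (using $k\mid n$ and $m\mid n$). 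Interchanging the roles of $f$ and $g$ rules out $m>k$ as well, leaving $m=k$.

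The one point requiring care --- and the only place where this is not completely mechanical --- is that the estimates in \eqref{directions} are stated for a $q$-polynomial whose maximum field of linearity is \emph{exactly} $\F_q$. To apply the upper bound to $g$ (with linearity field $\F_{q^k}$) and the lower bound to $f$ (with linearity field $\F_{q^m}$), I would first pass to the rescaled setting, replacing $q$ by $q^k$ (resp.\ $q^m$) and $n$ by $n/k$ (resp.\ $n/m$); this is legitimate since $k\mid n$ and $m\mid n$, and since a $q$-polynomial with maximum field of linearity $\F_{q^k}$ is a $q^k$-polynomial with maximum field of linearity $\F_{q^k}$. This is precisely the reduction already used inside the proof of Proposition \ref{fieldoflinearity}, so no genuinely new idea is needed: the whole statement is a restatement of that proof, with \eqref{size} supplying the cardinality equality in place of the hypothesis $Im(f(x)/x)=Im(g(x)/x)$.
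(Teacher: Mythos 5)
Your proof is correct and is exactly the paper's intended argument: the authors state that the proposition ``easily follows'' from Equations \eqref{size} and \eqref{directions}, and your write-up simply makes explicit the same cardinality comparison already used in the proof of Proposition \ref{fieldoflinearity}, with \eqref{size} supplying the equality of image sizes. No further comment is needed.
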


Note that $|Im\,(g(x)/x)|=|Im\,(f(x)/x)|$ does not imply the equivalence of $f$ and $g$. In fact, in the last section we will list the known examples of $q$-polynomials $f$ which are not equivalent to monomials but the size of $Im\,(f(x)/x)$ is maximal. To find such functions was also proposed in \cite{Praha} and, as it was observed by Sheekey, they determine certain MRD-codes \cite{Sh}.

The following results will be useful later in the paper.

\begin{proposition}
\label{imtrans}
If $Im\, (f(x)/x)=Im\, (g(x)/x)$ for some $q$-polynomials over $\F_{q^n}$, then $Im\, (f_\varphi(x)/x)=Im\, (g_\varphi(x)/x)$ holds for each admissible \footnote{An element $\varphi\in \G(2,q^n)$ represented by the invertible matrix $\begin{pmatrix}
a&b\\
c&d\\
\end{pmatrix}$ and with companion automorphism $\sigma$ of $\F_{q^n}$ is said to be \emph{admissible} w.r.t. a given $q$-polynomial $f$ if either $b=0$ or $-(a/b)^{\sigma^{-1}}\notin Im\, (f(x)/x)$.} $\varphi\in \G(2,q^n)$.
\end{proposition}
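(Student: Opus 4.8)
The plan is to obtain both image sets directly from the transformation formula \eqref{map}. The point of that formula is that $Im\,(f_\varphi(x)/x)$ is the image of $Im\,(f(x)/x)$ under the map $\mu\colon z\mapsto (c+dz^\sigma)/(a+bz^\sigma)$, where $a,b,c,d,\sigma$ are the data of $\varphi$. Crucially, $\mu$ depends only on $\varphi$ and not on $f$; hence the right-hand side of \eqref{map} is completely determined by the \emph{set} $Im\,(f(x)/x)$. So, once \eqref{map} is applicable to both $f$ and $g$, the equality $Im\,(f(x)/x)=Im\,(g(x)/x)$ immediately propagates to $f_\varphi$ and $g_\varphi$.

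Before applying \eqref{map}, I would first check that both $f_\varphi$ and $g_\varphi$ are genuinely defined, i.e.\ that admissibility of $\varphi$ with respect to $f$ also yields admissibility with respect to $g$. By definition this means $b=0$ or $-(a/b)^{\sigma^{-1}}\notin Im\,(f(x)/x)$; since this condition refers only to the image set and, by hypothesis, $Im\,(f(x)/x)=Im\,(g(x)/x)$, the element $\varphi$ is admissible with respect to $f$ if and only if it is admissible with respect to $g$. Equivalently, in the language of Proposition \ref{fvarphi}, the map $k_f$ is invertible precisely when $k_g$ is, so $g_\varphi=h_g\circ k_g^{-1}$ is a well-defined $q$-polynomial whenever $f_\varphi$ is.

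With both $f_\varphi$ and $g_\varphi$ defined, I would write $S:=Im\,(f(x)/x)=Im\,(g(x)/x)$ and apply \eqref{map} to each, obtaining
\[
Im\,\left(\frac{f_\varphi(x)}{x}\right)=\{\mu(z)\colon z\in S\}=Im\,\left(\frac{g_\varphi(x)}{x}\right),
\]
which is exactly the assertion. The only step requiring care — and the one I expect to be the sole subtlety — is the well-definedness of $\mu$ on all of $S$: the denominator $a+bz^\sigma$ must not vanish for any $z\in S$. When $b=0$ this is automatic, since invertibility of $\varphi$ forces $a\neq 0$; when $b\neq 0$, a vanishing denominator would force $z=-(a/b)^{\sigma^{-1}}\in S$, contradicting admissibility. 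This is precisely the content of the admissibility hypothesis, and because $S$ is common to $f$ and $g$ it holds simultaneously for both. Thus there is no real obstacle, and the proposition reduces to applying \eqref{map} twice after the transfer of admissibility is noted.
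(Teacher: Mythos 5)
Your argument is correct and is essentially the paper's own proof: transfer admissibility from $f$ to $g$ using the fact that the admissibility condition depends only on the common image set, then apply Equation \eqref{map} to both functions and observe that the fractional map depends only on $\varphi$. Your additional remarks on the non-vanishing of the denominator just make explicit what the admissibility hypothesis is for, so nothing is missing.
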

\begin{proof}
From $Im\,(f(x)/x)=Im\, (g(x)/x)$ it follows that any $\varphi\in \G(2,q^n)$ admissible w.r.t. $f$ is admissible w.r.t. $g$ as well. Hence $k_f$ and $k_g$ are both invertible and we may construct $f_\varphi$ and $g_\varphi$ as indicated in Proposition \ref{fvarphi}. The statement now follows from Equation \eqref{map}.
\end{proof}


\begin{proposition}
\label{lambda0}
Let $f$ and $g$ be $q$-polynomials over $\F_{q^n}$ and take some $\varphi\in \G(2,q^n)$ with companion automorphism $\sigma$. Then 
$g_\varphi(x)=f_\varphi(\lambda^\sigma x)/\lambda^\sigma$
for some $\lambda\in \F_{q^n}^*$ if and only if $g(x)=f(\lambda x)/\lambda$.
\end{proposition}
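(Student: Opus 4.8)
The plan is to recognize the scaling operation $f(x)\mapsto f(\lambda x)/\lambda$ as itself an instance of the equivalence relation, namely the one induced by a scalar map, and then to reduce the whole statement to a commutation relation in $\G(2,q^n)$. Concretely, let $\tau_\lambda\in\G(2,q^n)$ be the element with linear part $\left(\begin{smallmatrix}\lambda^{-1}&0\\0&\lambda^{-1}\end{smallmatrix}\right)$ and trivial companion automorphism. Applying Proposition \ref{fvarphi} to $\tau_\lambda$ gives $k_f(x)=\lambda^{-1}x$, hence $k_f^{-1}(x)=\lambda x$ and $f_{\tau_\lambda}(x)=\lambda^{-1}f(\lambda x)=f(\lambda x)/\lambda$; equivalently, on graphs, ${\mathcal G}_{f_{\tau_\lambda}}=\tau_\lambda\cdot{\mathcal G}_f$, since scaling the pairs $(x,f(x))$ by $\lambda^{-1}$ reparametrizes the graph of $f(\lambda x)/\lambda$. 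Thus the hypothesis $g(x)=f(\lambda x)/\lambda$ is exactly $g=f_{\tau_\lambda}$, and likewise $g_\varphi(x)=f_\varphi(\lambda^\sigma x)/\lambda^\sigma$ is exactly $g_\varphi=(f_\varphi)_{\tau_{\lambda^\sigma}}$.

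With this reformulation, the two ingredients I would establish are: (i) the equivalence is functorial, in the sense that $(f_\psi)_\chi=f_{\chi\psi}$ whenever the relevant graphs are genuinely graphs of functions, which is immediate from ${\mathcal G}_{f_\psi}=\psi\cdot{\mathcal G}_f$ together with the associativity of the $\G(2,q^n)$-action; and (ii) the commutation relation $\varphi\tau_\lambda=\tau_{\lambda^\sigma}\varphi$. For (ii) I would simply compute both composite semilinear maps on a vector $\mathbf{v}$: writing $\varphi\colon \mathbf{v}\mapsto M\mathbf{v}^\sigma$ with $M$ the matrix of $\varphi$, one gets $\varphi\tau_\lambda\colon \mathbf{v}\mapsto M(\lambda^{-1}\mathbf{v})^\sigma=(\lambda^\sigma)^{-1}M\mathbf{v}^\sigma$, which is precisely the action of $\tau_{\lambda^\sigma}\varphi$. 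This is where the twist by $\sigma$ enters and explains the appearance of $\lambda^\sigma$ in the statement: the scalar $\lambda^{-1}$ must be pulled through $\varphi$ and is raised to the power $\sigma$ in the process.

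Combining these, from $g=f_{\tau_\lambda}$ I obtain $g_\varphi=(f_{\tau_\lambda})_\varphi=f_{\varphi\tau_\lambda}=f_{\tau_{\lambda^\sigma}\varphi}=(f_\varphi)_{\tau_{\lambda^\sigma}}$, which is the desired $g_\varphi(x)=f_\varphi(\lambda^\sigma x)/\lambda^\sigma$. Since every step here is an equivalence ($\tau_\lambda$ and $\tau_{\lambda^\sigma}$ have $b=0$ and are therefore admissible, the $\G(2,q^n)$-action is by bijections, and $f_\varphi,g_\varphi$ are assumed to exist), the argument reverses verbatim, yielding the converse. I expect no serious obstacle: the only points requiring care are the bookkeeping of the semilinear composition in (ii)---making sure $\sigma$ lands on $\lambda$ in the correct place---and checking that all intermediate objects are graphs of functions so that the functoriality in (i) genuinely applies; both are routine. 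As a self-contained alternative one could verify the identity directly through Proposition \ref{fvarphi}, using that $\sigma$ commutes with the $q$-power Frobenius to write $g(x)^\sigma=f^\sigma(\lambda^\sigma x^\sigma)/\lambda^\sigma$ and then comparing the resulting $k$- and $h$-functions, but the group-theoretic route is cleaner.
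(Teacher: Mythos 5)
Your proof is correct and is essentially the paper's own argument in group-theoretic clothing: the paper writes $g=\omega_{1/\lambda}\circ f\circ\omega_\lambda$ and pushes the scalar maps through $k_f$ and $h_f$ (picking up $\sigma$ exactly as in your commutation relation $\varphi\tau_\lambda=\tau_{\lambda^\sigma}\varphi$), and it obtains the converse the same way you do, by applying the forward direction to $\varphi^{-1}$. No gaps; the identification of $x\mapsto f(\lambda x)/\lambda$ with the action of a central element of $\G(2,q^n)$ is precisely the idea underlying the paper's computation.
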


\begin{proof}
First we prove the "if" part. Since $g(x)=f(\lambda x)/\lambda=(\omega_{1/\lambda}\circ f \circ \omega_\lambda)(x)$, where $\omega_\alpha$ denotes the scalar map $x\in\F_{q^n}\mapsto\alpha x\in\F_{q^n}$, direct computations show that $h_g=\omega_{1/\lambda^\sigma}\circ h_f\circ\omega_\lambda$ and $k_g=\omega_{1/\lambda^\sigma}\circ k_f\circ\omega_\lambda$. Then $g_\varphi=\omega_{1/\lambda^\sigma}\circ f_\varphi\circ\omega_{\lambda^\sigma}$ and the first part of the statement follows. The "only if" part follows from the "if" part applied to $g_\varphi(x)=f_\varphi(\lambda^\sigma x)/\lambda^\sigma$ and $\varphi^{-1}$; and from $(f_\varphi)_{\varphi^{-1}}=f$ and $(g_\varphi)_{\varphi^{-1}}=g$. 
\end{proof}


Next we summarize what is known about Problem \eqref{start} for $n\leq 4$.

\begin{theorem}
	\label{n=4}
	Suppose $Im\, (f(x)/x)=Im\, (g(x)/x)$ for some $q$-polynomials over $\F_{q^n}$, $n\leq 4$, with maximum field of linearity $\F_q$. Then there exist $\varphi\in \mathrm{GL}(2,q^n)$ and $\lambda \in \F_{q^n}^*$ such that the following holds.
	\begin{itemize}
		\item If $n=2$ then $f_\varphi(x) = x^q$ and $g(x) = f(\lambda x)/\lambda$.
		\item If $n=3$ then either
		\[f_\varphi(x)=\Tr(x) \mbox{ and } g(x)=f(\lambda x)/\lambda\]
		or
		\[f_\varphi(x) = x^q \mbox{ and } g(x)=f(\lambda x)/\lambda \mbox{ or } g(x)=\hat{f}(\lambda x)/\lambda.\]
		\item  If $n=4$ then $g(x) = f(\lambda x)/\lambda$ or $g(x) = \hat{f}(\lambda x)/\lambda$.
	\end{itemize}
\end{theorem}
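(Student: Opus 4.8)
The plan is to reduce each case to one of the two situations already solved --- the monomial case (Corollary \ref{norm}) and the trace case (Theorem \ref{trace}) --- and then to transport the answer back to the original $f,g$ by means of Propositions \ref{imtrans} and \ref{lambda0}. The principal tool for locating the right canonical form is the weight count for the linear set attached to $f$: writing $U_f=\{(x,f(x))\colon x\in\F_{q^n}\}$, the value $z\in Im(f(x)/x)$ is attained by exactly $q^{w(z)}-1$ nonzero elements, where $w(z)=\dim_{\F_q}\ker(f-z\,\mathrm{id})$, and $\sum_z (q^{w(z)}-1)=q^n-1$. Combined with the bounds \eqref{directions}, which apply to both $f$ and $g$ because by Proposition \ref{fieldoflinearity} they share the field of linearity $\F_q$, this pins down the admissible weight distributions for small $n$.

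For $n=2$ the bounds force $|Im(f(x)/x)|=q+1$, and an explicit $\F_{q^2}$-linear change of coordinates --- subtracting the $\F_{q^2}$-linear part of $f$ and rescaling the second coordinate, i.e. the map of Proposition \ref{fvarphi} with $\sigma=\mathrm{id}$ and $b=0$ --- turns $f$ into $f_\varphi(x)=x^q$. By Proposition \ref{imtrans} I may work with $f_\varphi,g_\varphi$, so Corollary \ref{norm} gives $g_\varphi(x)=\beta x^q$ with $\N(\beta)=1$. Since in $\F_{q^2}$ the norm-one elements are precisely the $(q-1)$-th powers, I write $\beta=\mu^{q-1}$, obtain $g_\varphi(x)=f_\varphi(\mu x)/\mu$, and conclude $g(x)=f(\lambda x)/\lambda$ via Proposition \ref{lambda0}. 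As $\hat f=f$ here, no adjoint solution occurs.

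For $n=3$, since $3$ is prime, having $\F_q$ as maximum field of linearity only means $f$ is not $\F_{q^3}$-linear, so no point has weight $3$. The identity above together with \eqref{directions} forces at most one point of weight $2$, leaving exactly the two possibilities $|Im(f(x)/x)|=q^2+q+1$ (all weights $1$, the scattered case) or $q^2+1$ (one point of weight $2$). Invoking the classification of rank-$3$ linear sets of $\PG(1,q^3)$, in the first case $f_\varphi(x)=x^q$ and in the second $f_\varphi(x)=\Tr(x)$. In the monomial case Corollary \ref{norm} yields $g_\varphi(x)=\beta x^{q^s}$ with $s\in\{1,2\}$ and $\N(\beta)=\N(\alpha)$; recognizing $x^{q^2}=\hat{(x^q)}$ and checking that the norm condition is exactly the solvability of the relevant power equation returns $g(x)=f(\lambda x)/\lambda$ for $s=1$ and $g(x)=\hat f(\lambda x)/\lambda$ for $s=2$. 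In the trace case Theorem \ref{trace} gives $g_\varphi(x)=\Tr(\lambda x)/\lambda$, and since $\hat{\Tr}=\Tr$ only the scalar solution remains.

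The case $n=4$ is the main obstacle, and here the reduction strategy breaks down: a $q$-polynomial over $\F_{q^4}$ with maximum field of linearity $\F_q$ need not be equivalent to a monomial or to $\Tr$, so no single canonical form is available and one must compare $f$ and $g$ directly. The plan is to stay in the subspace picture, where the hypothesis says that $U_f$ and $U_g$ define the same rank-$4$ linear set $L$ of $\PG(1,q^4)$ with the weights prescribed above. What must be proved is a rigidity statement: among all $q$-polynomial graphs determining the point set $L$, the only ones are the $\F_{q^4}$-scalar reparametrizations of $U_f$ (accounting for $g(x)=f(\lambda x)/\lambda$) and their images under the adjoint with respect to $\la x,y\ra=\Tr(xy)$ (accounting for $g(x)=\hat f(\lambda x)/\lambda$), the latter because passing to $\hat f$ realizes the dual linear set. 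Carrying out this analysis --- controlling every subspace that yields the given point set through its weight distribution, and matching the adjoint operation to the dual of $L$ --- is where essentially all the difficulty of $n=4$ concentrates, and I expect the interplay between $L$ and its dual to be the decisive ingredient.
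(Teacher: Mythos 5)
Your treatment of $n=2$ and of the trace subcase of $n=3$ matches the paper's route (reduce to a canonical form via Proposition \ref{imtrans}, apply Corollary \ref{norm} or Theorem \ref{trace}, transport back with Proposition \ref{lambda0}), and your weight-count argument pinning down the two possibilities for $n=3$ is a legitimate substitute for the paper's appeal to the classification of rank-$3$ linear sets of $\PG(1,q^3)$. However, in the $n=3$ monomial case with $s=2$ you gloss over a real step. From $g_\varphi(x)=\beta x^{q^2}$ with $\N(\beta)=1$ you do get $g_\varphi(x)=\widehat{f_\varphi}(\mu x)/\mu$, but Proposition \ref{lambda0} only transports relations of the form $g_\varphi(x)=f_\varphi(\lambda^\sigma x)/\lambda^\sigma$; there is no analogous statement for the adjoint, and indeed $\widehat{f_\varphi}$ and $(\hat f)_\varphi$ do not coincide in general (the adjoint reverses compositions, so $\widehat{h_f\circ k_f^{-1}}=\widehat{k_f}^{-1}\circ\widehat{h_f}$, which is not $h_{\hat f}\circ k_{\hat f}^{-1}$ on the nose). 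The paper closes this gap by writing out $f=(f_\varphi)_{\varphi^{-1}}$ and $g=(g_\varphi)_{\varphi^{-1}}$ explicitly in terms of the matrix of $\varphi^{-1}$ and verifying by hand that $g(x)=\hat f(\lambda x)/\lambda$ for a suitable $\lambda$ with $\lambda^{q-1}=\Delta^{1-q}/\alpha^q$. Your phrase ``checking that the norm condition is exactly the solvability of the relevant power equation'' does not supply this verification.

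The more serious problem is $n=4$, where you do not give a proof at all: you correctly observe that the reduction to a canonical form breaks down, and then describe a program (a ``rigidity statement'' about all subspaces defining the linear set $L$, with the duality between $L$ and its dual expected to be decisive) without carrying it out. This is precisely where the content of the statement lies for $n=4$, and the paper itself does not reprove it but cites \cite[Proposition 4.2]{CSMP2016}. As written, your argument establishes the $n=2$ case, most of the $n=3$ case, and nothing of the $n=4$ case; to be complete you would need either to execute the rigidity analysis you sketch or to invoke the external result as the paper does.
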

\begin{proof}
	In the $n=2$ case $f(x)=ax+bx^q$, $b\neq 0$. Then $\varphi:=\begin{pmatrix}
	1 & 0 \\
	-a/b & 1/b \\
\end{pmatrix}$ maps $f(x)$ to $x^q$. Then Proposition \ref{imtrans} and Corollary \ref{norm} give $g_\varphi(x)=f_\varphi(\mu x)/\mu$ and hence Proposition \ref{lambda0} gives $g(x)=f(\lambda x)/\lambda$ for some $\lambda\in\F_{q^n}$.
	If $n=3$ then according to \cite[Theorem 5]{LaVa2010} and \cite[Theorem 1.3]{CSMP2016}  there exists $\varphi\in \mathrm{GL}(2,q^3)$ such that either $f_\varphi(x)=\Tr(x)$ or $f_\varphi(x)=x^q$.
	In the former case Proposition \ref{imtrans} and Theorem \ref{trace} give $g_\varphi(x)=f_\varphi(\mu x)/\mu$ and the assertion follows from Proposition \ref{lambda0}. In the latter case Proposition \ref{imtrans} and Corollary \ref{norm} give $g_\varphi(x)=\alpha x^{q^i}$ where $i\in \{1,2\}$ and $\N(\alpha)=1$. If $i=1$, then $g_\varphi(x)=f_\varphi(\mu x)/\mu$ where $\mu^{q-1}=\alpha$ and the assertion follows from Proposition \ref{lambda0}. Let now $i=2$ and denote by 
	$\begin{pmatrix}
	A&B\\
	C&D\\
	\end{pmatrix}$ the matrix of $\varphi^{-1}$. Also, let $\Delta$ denote the determinant of this matrix. Since $\N(\alpha)=1$, direct computations show
	\[f(x)=(f_{\varphi})_{\varphi^{-1}}(x)=\frac{(A^{q+q^2}C+B^{q+q^2}D)x+A^{q^2}\Delta x^q-B^q\Delta x^{q^2}}{\N(A)+\N(B)},\]
	\[g(x)=(g_{\varphi})_{\varphi^{-1}}(x)=\frac{(A^{q+q^2}C+B^{q+q^2}D)x-B^{q^2}\Delta \alpha^{q^2+1}x^q+A^q\Delta\alpha x^{q^2}}{\N(A)+\N(B)},\]
	and hence $g(x)=\hat{f}(\lambda x)/\lambda$ for each $\lambda\in \F_{q^3}^*$ with  $\lambda^{q-1}=\Delta^{1-q}/\alpha^q$.
	
	The case $n=4$ is \cite[Proposition 4.2 ]{CSMP2016}.
\end{proof}

\begin{remark}
{\rm
Theorem \ref{n=4} yields that there is a unique equivalence class of $q$-polynomials, with maximum field of linearity $\F_q$, when $n=2$. For $n=3$ there are two non-equivalent classes and they correspond to the classical examples: $\Tr(x)$ and $x^q$. Whereas, for $n=4$, from \cite[Sec. 5.3]{CSMP2016} and \cite[Table p. 54]{BoPo2005}, there exist at least eight non-equivalent classes. The possible sizes for the sets of directions determined by these strictly $\F_q$-linear functions are $q^3+1$, $q^3+q^2-q+1$, $q^3+q^2+1$ and $q^3+q^2+q+1$ and each of them is determined by at least two non-equivalent $q$-polynomials. Also, by \cite[Theorem 3.4]{CSZ2017}, if $f$ is a $q$-polynomial over $\F_{q^4}$ for which the set of directions is of maximum size then $f$ is equivalent either to $x^q$ or to $\delta x^q+x^{q^3}$, for some $\delta\in\F_{q^4}^*$ with $N(\delta)\ne 1$ (see \cite{LP2001}).}
\end{remark}

\section{\texorpdfstring{Preliminary results about $\Tr(x)$ and the monomial $q$-polynomials over $\F_{q^5}$}{Preliminary results about Tr(x) and the monomial q-polynomials over Fq5}}

Let $q$ be a power of a prime $p$. We will need the following results.

\begin{proposition}
\label{trace5}
Let $f(x)=\sum_{i=0}^4a_ix^{q^i}$ and $g(x)=\Tr(x)$ be $q$-polynomials over $\F_{q^5}$. Then there is an element $\varphi\in\Gamma\mathrm{L}(2,q^5)$ such that $Im\,(f_\varphi(x)/x)=Im\,(g(x)/x)$ if and only if $a_1a_2a_3a_4\neq0$, $(a_1/a_2)^q=a_2/a_3$, $(a_2/a_3)^q=a_3/a_4$ and $\N(a_1)=\N(a_2)$.
\end{proposition}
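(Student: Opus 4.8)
The plan is to reduce the statement to the $\G(2,q^5)$-equivalence of $f$ with $\Tr$, and then to recognise the orbit of $\Tr$ from the coefficients of $f$. First I would note that the existence of an admissible $\varphi$ with $Im\,(f_\varphi(x)/x)=Im\,(\Tr(x)/x)$ is equivalent to $f$ being equivalent to $\Tr$. Indeed, applying Theorem \ref{trace} to the $q$-polynomial $f_\varphi$ yields $f_\varphi(x)=\Tr(\lambda x)/\lambda$ for some $\lambda\in\F_{q^5}^*$, so $f$ is equivalent to $\Tr$; conversely, if $f$ is equivalent to $\Tr$ one chooses $\varphi$ with $f_\varphi=\Tr$, and then the two image sets coincide trivially. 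By Propositions \ref{fieldoflinearity} and \ref{prop:lin} such an $f$ has maximum field of linearity $\F_q$, a fact I will use to exclude degenerate cases. Thus everything comes down to showing that a $q$-polynomial $f$ over $\F_{q^5}$ is equivalent to $\Tr$ if and only if the four coefficient conditions hold.

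For the sufficiency I would first put the conditions into a normal form. Writing $r:=a_1/a_2$, the relations $(a_1/a_2)^q=a_2/a_3$ and $(a_2/a_3)^q=a_3/a_4$ give $a_2=a_1 r^{-1}$, $a_3=a_1 r^{-(1+q)}$, $a_4=a_1 r^{-(1+q+q^2)}$, while $\N(a_1)=\N(a_2)$ forces $\N(r)=1$; by Hilbert's Theorem 90, $r=s^{q-1}$ for some $s\in\F_{q^5}^*$. Substituting $r=s^{q-1}$ gives $a_i=a_1 s^{1-q^{i-1}}$ for $1\le i\le 4$, and collecting the terms (using $x^{q^5}=x$ and $\sum_{j=0}^{4}y^{q^j}=\Tr(y)$) rewrites $f$ as $f(x)=c_0x+a_1 s\,\Tr(s^{-1}x^q)$ for a suitable $c_0\in\F_{q^5}$. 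It then remains to transport this to $\Tr(\lambda x)/\lambda$: the shear $\begin{pmatrix}1&0\\-c_0&1\end{pmatrix}$ deletes the term $c_0 x$, and the semilinear map with diagonal matrix $\begin{pmatrix}1&0\\0&d\end{pmatrix}$ and companion automorphism $x\mapsto x^q$ sends $a_1 s\,\Tr(s^{-1}x^q)$ to a scalar multiple of $\Tr(s^{-1}x)$, because $\Tr$ takes values in $\F_q$ and is therefore invariant under the Frobenius. Choosing $d$ appropriately produces exactly $\Tr(\lambda x)/\lambda$ with $\lambda=s^{-1}$.

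For the necessity I would exploit $f_\varphi(x)=\Tr(\lambda x)/\lambda$ without inverting any map explicitly. Reading $f=(\Tr(\lambda x)/\lambda)_{\varphi^{-1}}$ through Proposition \ref{fvarphi}, with $\varphi^{-1}$ given by $\begin{pmatrix}A&B\\C&D\end{pmatrix}$ and companion $\tau$, and using again that $\Tr$ is $\F_q$-valued, one obtains the functional equation $f\bigl(Ax+B'S(x)\bigr)=Cx+D'S(x)$, where $S(x):=\Tr(\lambda x^{\tau^{-1}})$ is $\F_q$-valued with a $4$-dimensional kernel and $B',D'$ are scalars. Invertibility of $\varphi^{-1}$ forces $A\neq0$: if $A=0$ the left-hand side would depend on $x$ only through $S(x)$, while the right-hand side separates distinct points of $\ker S$. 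Restricting to $x\in\ker S$ then gives $f(Ax)=Cx$, so $f(y)-(C/A)y$ vanishes on the hyperplane $H:=A\,\ker S$. Since $f$ has maximum field of linearity $\F_q$, this $\F_q$-linearised polynomial is nonzero; having $\F_q$-degree at most $q^4$ and a $4$-dimensional space of roots, it must equal $a_4$ times the monic subspace polynomial of $H$ (so in particular its degree is exactly $q^4$ and $a_4\neq0$). As $\ker S$ is a scalar multiple of $\ker\Tr$, we get $H=\mu\,\ker\Tr$, whose subspace polynomial is $\mu^{q^4}\Tr(\mu^{-1}x)$; hence $f(x)=c_0x+a_4\mu^{q^4}\Tr(\mu^{-1}x)$. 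A direct expansion of this shape then yields all four conditions, in particular $a_1a_2a_3a_4\neq0$ because $a_4,\mu\neq0$.

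The crux of the argument is this necessity direction: converting the abstract equivalence $f_\varphi=\Tr(\lambda x)/\lambda$, for a completely general $\varphi$, into explicit relations among the $a_i$. The key device is to sidestep the inversion of $k_{\Tr}$ and instead evaluate $f$ on the hyperplane where the trace-like map $S$ vanishes, after which the subspace-polynomial identification determines $f$ up to its $x$-coefficient. The points requiring care are the degenerate cases $A=0$ and $f(x)-(C/A)x\equiv0$, which are precisely where the invertibility of $\varphi$ and the maximum-field-of-linearity hypothesis enter.
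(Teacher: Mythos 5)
Your proof is correct, and its necessity half takes a genuinely different route from the paper's. Both arguments begin identically, invoking Theorem \ref{trace} to reduce to $f_\varphi(x)=\Tr(\lambda x)/\lambda$ and Propositions \ref{fieldoflinearity} and \ref{prop:lin} to pin down the field of linearity. From there the paper observes that the second coordinate $cx^\sigma+df(x)^\sigma$ of the transformed graph must be $\F_q$-valued, hence equal to its own $q$-th power; reducing modulo $z^{q^5}-z$ and comparing coefficients gives $d\neq 0$, the nonvanishing of $a_1a_2a_3a_4$, and the three relations in a few lines. You instead invert the equivalence via Proposition \ref{fvarphi}, restrict the resulting functional equation to the hyperplane $\ker S=\lambda^{-\tau}\ker\Tr$, and identify $f(y)-(C/A)y$ as $a_4$ times the subspace polynomial of a dilate of $\ker\Tr$. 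This is heavier machinery (it needs the fact that a nonzero $q$-polynomial of $q$-degree at most $4$ vanishing on a $4$-dimensional $\F_q$-subspace is a scalar multiple of its subspace polynomial), but it is more conceptual: it exhibits $f$ directly in the normal form $c_0x+a_4\mu^{q^4}\Tr(\mu^{-1}x)$ rather than extracting coefficient identities, and it isolates exactly where invertibility of $\varphi$ and the maximality of the field of linearity are used. For sufficiency the two proofs are close in spirit; the paper writes down a single explicit matrix in $\mathrm{GL}(2,q^5)$ (showing in passing that no field automorphism is needed), whereas you pass through Hilbert 90 and a Frobenius-twisted diagonal map, which is equally valid since only membership in $\Gamma\mathrm{L}(2,q^5)$ is claimed. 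One cosmetic slip: the quantity occurring in your functional equation is $S(x)^\tau$ rather than $S(x)$, since $\tau$ need not fix $\F_q$ pointwise; this is harmless because only the kernel and the $\F_q$-valuedness of that map are used.
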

\begin{proof}
Let $\varphi\in\Gamma\mathrm{L}(2,q^5)$ such that $Im\,(f_\varphi(x)/x)=Im\,(g(x)/x)$.  By Proposition \ref{prop:lin}, the maximum field of linearity of $f$ is $\F_q$ and by Theorem \ref{trace} there exists $\lambda\in\F_{q^5}^*$ such that $f_\varphi(x)=\Tr(\lambda x)/\lambda$.
This is equivalent to say that there exist $a,b,c,d$, $ad-bc\neq 0$ and $\sigma=p^h$ such that
\[\left\{
\begin{pmatrix}
y\\
\Tr(y)
\end{pmatrix}
\colon y\in \F_{q^5}
\right\}=
\left\{
\begin{pmatrix}
a&b\\
c&d\\
\end{pmatrix}
\begin{pmatrix}
x^\sigma\\
f(x)^\sigma
\end{pmatrix}
\colon x \in \F_{q^5}\right\}.\]
Then $c x^\sigma+d f(x)^\sigma \in \F_q$ for each $x\in \F_{q^5}$. Let $z=x^{\sigma}$. Then
\[cz+d\sum_{i=0}^4a_i^{\sigma}z^{q^i}=c^q z^{q}+d^q\sum_{i=0}^4 a_i^{\sigma q}z^{q^{i+1}},\]
for each $z$. As polynomials of $z$ the left and right-hand sides of the above equation coincide modulo $z^{q^5}-z$ and hence comparing coefficients yield
\[c+da_0^\sigma = d^q a_4^{\sigma q},\]
\[d a_1^\sigma = c^q + d^q a_0^{\sigma q},\]
\[d a_{k+1}^\sigma=d^q a_k^{\sigma q},\]
for $k=1,2,3$.
If $d=0$, then $c=0$, a contradiction.
Since $d\neq 0$, if one of $a_1,a_2,a_3,a_4$ is zero, then all of them are zero and hence $f$ is $\F_{q^5}$-linear. This is not the case, so we have $a_1a_2a_3a_4\neq 0$.
Then the last three equations yield
\[\left(\frac{a_1}{a_2}\right)^q=\frac{a_2}{a_3},\]
\[\left(\frac{a_2}{a_3} \right)^q=\frac{a_3}{a_4},\]
and also $\N(a_1)=\N(a_2)$.

Now assume that the conditions of the assertion hold.
It follows that $a_3=a_2^{q+1}/a_1^q$ and $a_4=a_3^{q+1}/a_2^q=a_2^{q^2+q+1}/a_1^{q^2+q}$.
Let $\alpha_i=a_i/a_1$ for $i=0,1,2,3,4$. Then $\alpha_1=1$, $\N(\alpha_2)=1$, $\alpha_3=\alpha_2^{q+1}$ and $\alpha_4=\alpha_2^{1+q+q^2}$. We have $\alpha_2=\lambda^{q-1}$ for some $\lambda\in \F_{q^5}^*$.
If
\[
\begin{pmatrix}
a&b\\
c&d
\end{pmatrix}
=
\begin{pmatrix}
1&0\\
1-\lambda^{1-q^4}a_0/a_1& \lambda^{1-q^4}/a_1
\end{pmatrix},
\]
then
\[
\begin{pmatrix}
a&b\\
c&d
\end{pmatrix}
\begin{pmatrix}
x\\
f(x)
\end{pmatrix}
=\]
\[\begin{pmatrix}
x\\
x+\lambda^{1-q^4}x^q+\lambda^{q-q^4}x^{q^2}+\lambda^{q^2-q^4}x^{q^3}+\lambda^{q^3-q^4}x^{q^4}
\end{pmatrix}
=
\begin{pmatrix}
x\\
\Tr(x \lambda^{q^4})/\lambda^{q^4}
\end{pmatrix},\] i.e. $f_\varphi(x)=\Tr(\lambda^{q^4}x)/\lambda^{q^4}$, where $\varphi$ is defined by the matrix $\begin{pmatrix}
a&b\\
c&d
\end{pmatrix}$.
\end{proof}

\begin{proposition}
\label{pseudoalg}
Let $f(x)=\sum_{i=0}^4a_ix^{q^i}$, with $a_1a_2a_3a_4\neq 0$. 
Then there is an element $\varphi\in\Gamma\mathrm{L}(2,q^5)$ such that $Im\,(f_\varphi(x)/x)=Im\,(x^{q-1})$ if and only if one of the following holds:
\begin{enumerate}
	\item $(a_1/a_2)^q=a_2/a_3$, $(a_2/a_3)^q=a_3/a_4$ and $\N(a_1)\neq \N(a_2)$, or
	\item $(a_4/a_1)^{q^2}=a_1/a_3$, $(a_1/a_2)^{q^2}=a_3/a_4$ and $\N(a_1)\neq \N(a_3)$.
\end{enumerate}
In both cases, if the condition on the norms does not hold, then $Im\,(f_\varphi(x)/x)=Im\,(\Tr(x)/x)$.
\end{proposition}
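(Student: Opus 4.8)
The plan is to reduce the statement to an equivalence-to-monomial question and then to carry out a coefficient comparison in the spirit of the proof of Proposition~\ref{trace5}. Since $Im\,(x^{q-1})=Im\,(x^q/x)$ is exactly the set of norm-one elements of $\F_{q^5}^*$ (and, for the record, $a_1\neq 0$ together with $5$ being prime forces the maximum field of linearity of $f$ to be $\F_q$), Corollary~\ref{norm}, applied with $x^q$ as the monomial, shows that a $\varphi$ with $Im\,(f_\varphi(x)/x)=Im\,(x^{q-1})$ exists if and only if there are $\varphi$, $\beta$ and $s$ with $\gcd(s,5)=1$, $\N(\beta)=1$ and $f_\varphi(x)=\beta x^{q^s}$.

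Representing $\varphi$ by $\left(\begin{smallmatrix}a&b\\c&d\end{smallmatrix}\right)$ with companion automorphism $\sigma$ and using Proposition~\ref{fvarphi}, the identity $f_\varphi(x)=\beta x^{q^s}$ is equivalent to the polynomial identity $h_f(y)=\beta\,k_f(y)^{q^s}$. With $z=y^\sigma$ and $b_i=a_i^\sigma$ this reads
\[cz+d\sum_{i=0}^{4}b_iz^{q^i}=\beta\Big(az+b\sum_{i=0}^{4}b_iz^{q^i}\Big)^{q^s},\]
and comparing coefficients modulo $z^{q^5}-z$ gives five equations. The three equations indexed by $j\notin\{0,s\}$ have the form $d\,b_j=\beta b^{q^s}b_{j-s}^{q^s}$ (indices mod $5$); setting the common value $b_j/b_{j-s}^{q^s}$ equal to $\rho:=\beta b^{q^s}/d$ and eliminating $\rho$ yields two multiplicative relations among $b_1,b_2,b_3,b_4$. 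Applying $\sigma^{-1}$, which commutes with the Frobenius and preserves norms over $\F_q$, these translate into relations among the $a_i$: for $s\in\{1,4\}$ one obtains the recurrences of the first condition and for $s\in\{2,3\}$ those of the second condition (the two values of $s$ in each pair giving equivalent relations after raising to a suitable power of $q$).

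The two remaining equations ($j=0$ and $j=s$) determine $c$ and the ratio $a/b$, and the $j=s$ equation fixes $\beta=\rho\,d/b^{q^s}$. Since $\N(\beta)=\N(d/b)\,\N(\rho)$ and $\N(d/b)$ attains every value of $\F_q^*$ as $d/b$ ranges over $\F_{q^5}^*$, the requirement $\N(\beta)=1$ imposes no condition on $f$; the only genuine constraint is the invertibility $\det\varphi=ad-bc\neq 0$. The main obstacle is the computation showing that, after substituting the expressions for $a$, $c$ and $\beta$, the determinant factors as $d\,b\cdot E$ with $E=0$ if and only if the relevant norm equality holds. I would clear the inverse Frobenius by raising $E$ to the $q^s$-th power and then telescope the recurrences; the exponent of $b_1$ collapses to $1+q+q^2+q^3+q^4$, so that $E=0$ becomes exactly $\N(a_1)=\N(a_2)$ in the first case and $\N(a_1)=\N(a_3)$ in the second. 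This yields both directions simultaneously: invertibility of any $\varphi$ realizing $f_\varphi(x)=\beta x^{q^s}$ forces the norm inequality, while conversely, under the recurrences and the norm inequality, one chooses $b,d$ so that $\N(\beta)=1$, and $E\neq 0$ guarantees $\varphi\in\G(2,q^5)$ (with $k_f$ automatically invertible, since $\det\varphi\neq0$ and $h_f=\beta k_f^{q^s}$ force $k_f$ to be injective) and $f_\varphi(x)=\beta x^{q^s}$.

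For the last assertion I distinguish the two conditions. Under the first condition with $\N(a_1)=\N(a_2)$, the equality $Im\,(f_\varphi(x)/x)=Im\,(\Tr(x)/x)$ is precisely Proposition~\ref{trace5}. Under the second condition with $\N(a_1)=\N(a_3)$, I would reduce to the previous case: the same telescoping shows that $\N(a_1)=\N(a_3)$ forces $\N(a_1)=\N(a_2)=\N(a_3)=\N(a_4)$, and raising the desired first-condition relations to the power $q^2+1$ and inserting the relation $a_4^{q^2+1}=a_1a_2^{q^2}$ (itself a consequence of the second condition) shows that the first condition holds up to a $(q^2+1)$-th root of unity; since $\gcd(q^2+1,q^5-1)=\gcd(2,q-1)$, the only possible ambiguity is a sign, which is pinned down by using the two relations of the second condition separately rather than their product. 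Proposition~\ref{trace5} then applies and gives $Im\,(f_\varphi(x)/x)=Im\,(\Tr(x)/x)$.
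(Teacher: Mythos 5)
Your proposal is correct and shares the paper's overall framework (reduce via Corollary~\ref{norm} to $f_\varphi(x)=\beta x^{q^s}$, then compare coefficients of $h_f(y)=\beta k_f(y)^{q^s}$ modulo $z^{q^5}-z$; the three ``off-diagonal'' equations give exactly the multiplicative recurrences of Conditions 1 and 2), but it executes the two delicate points differently. For the norm inequalities the paper argues by contradiction through cardinalities: if, say, $\N(a_1)=\N(a_2)$ held together with the Condition~1 recurrences, Proposition~\ref{trace5} would make $f$ equivalent to $\Tr(x)$, contradicting $|Im\,(x^{q-1})|=(q^5-1)/(q-1)\neq q^4+1$. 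You instead extract the inequality from $\det\varphi\neq 0$: solving the $j=0$ and $j=s$ equations for $c$ and $a$ and telescoping the recurrences does indeed give $ad-bc=d^{1+q^{5-s}}\beta^{-q^{5-s}}\bigl(b_s^{q^{5-s}}-\rho^{1+q^{5-s}}b_{-s}^{q^{s}}\bigr)$, which vanishes precisely when $\N(\rho)=1$, i.e.\ when $\N(a_1)=\N(a_2)$ (resp.\ $\N(a_1)=\N(a_3)$); I checked this for both $s=1$ and $s=2$. This buys you a single computation that yields necessity and sufficiency at once (the paper proves sufficiency separately by exhibiting explicit matrices), at the price of having to carry out the determinant factorization honestly; your handling of $\N(\beta)=1$ by adjusting $\N(d/b)$, and the automatic invertibility of $k_f$, are both fine. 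The one place where you are noticeably weaker than the paper is the final assertion in Case~2: the paper simply derives $a_2=a_1^{1+q+q^3}/a_3^{q^3+q}$ and $a_4=a_1^{q^3+1}/a_3^{q^3}$ from the Condition~2 recurrences and checks directly that $a_3^{1+q}=a_4a_2^q$ (hence Condition~1's recurrences) is equivalent to $\N(a_1)=\N(a_3)$, whereas your detour through $(q^2+1)$-th powers and a residual sign ambiguity is only sketched and would need the sign resolution spelled out; the claim is true and reachable from your ingredients, but as written this step is a plan rather than a proof.
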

\begin{proof}
We first note that the monomials $x^{q^i}$ and $x^{q^{5-i}}$ are equivalent via the map $\psi:=\begin{pmatrix}
0&1\\
1&0
\end{pmatrix}$. Hence, by Corollary \ref{norm}, the statement holds if and only if there exist $a,b,c,d$, $ad-bc\neq 0$, $\sigma=p^h$ and $i\in \{1,2\}$ such that
\begin{equation}
\label{pseudoxx}
\left\{
\begin{pmatrix}
y\\
y^{q^i}
\end{pmatrix}
\colon y\in \F_{q^5}
\right\}
=
\left\{
\begin{pmatrix}
a&b\\
c&d\\
\end{pmatrix}
\begin{pmatrix}
x^\sigma\\
f(x)^\sigma
\end{pmatrix}
\colon x \in \F_{q^5}\right\}
.
\end{equation}

If Condition 1 holds then let 
$\alpha_j=a_j/a_1$ for $j=0,1,2,3,4$. So $\alpha_1=1$, $\N(\alpha_2)\neq 1$, $\alpha_3=\alpha_2^{q+1}$, $\alpha_4=\alpha_2^{1+q+q^2}$ and \eqref{pseudoxx} is satisfied with
\[
\begin{pmatrix}
a&b\\
c&d
\end{pmatrix}
=
\begin{pmatrix}
1&\alpha_2^{q^4}\\
\alpha_2^{1+q+q^2+q^3}&1
\end{pmatrix}
\begin{pmatrix}
1&0\\
-\alpha_0 & 1/a_1
\end{pmatrix}.
\]

If Condition 2 holds then let $\alpha_j=a_j/a_3$ for $j=0,1,2,3,4$. So $\alpha_3=1$, $\N(\alpha_1)\neq 1$, $\alpha_2=\alpha_1^{1+q+q^3}$,  $\alpha_4=\alpha_1^{1+q^3}$ and \eqref{pseudoxx} is satisfied with
\[
\begin{pmatrix}
a&b\\
c&d
\end{pmatrix}
=
\begin{pmatrix}
\alpha_1^{1+q+q^3+q^4}&1\\
1&\alpha_1^{q^2}
\end{pmatrix}
\begin{pmatrix}
1&0\\
-\alpha_0 & 1/a_3
\end{pmatrix}.
\]

Suppose now that \eqref{pseudoxx} holds and put $z=x^\sigma$. Then
\[(za+b\sum_{j=0}^4 a_j^\sigma z^{q^j})^{q^i}=cz+d\sum_{j=0}^4a_j^\sigma z^{q^j}\]
for each $z\in \F_{q^5}$ and hence, as polynomials in $z$, the left-hand side and right-hand side of the above equation coincide modulo $z^{q^5}-z$.
The coefficients of $z$, $z^{q^i}$ and $z^{q^k}$ with $i\in \{1,2\}$ and $k\in \{1,2,3,4\}\setminus \{i\}$ give
\[b^{q^i} a_{-i}^{\sigma q^i} =c+da_0^\sigma,\]
\[a^{q^i} +b^{q^i} a_0^{\sigma q^i}=d a_i^\sigma,\]
\[b^{q^i} a_{k-i}^{\sigma q^i}=d a_{k}^\sigma,\]
respectively, where the indices are considered modulo 5.
Note that $db\neq 0$ since otherwise also $a=c=0$ and hence $ad-bc=0$.
With $\{r,s,t\}=\{1,2,3,4\} \setminus \{i\}$, the last three equations yield:
\[\left(\frac{a_{r-i}}{a_{s-i}}\right)^{q^i}=\frac{a_r}{a_s},\]
\[\left(\frac{a_{s-i}}{a_{t-i}} \right)^{q^i}=\frac{a_s}{a_t}.\]
First assume $i=1$.
Then we have
\[\left(\frac{a_1}{a_2}\right)^q=\frac{a_2}{a_3} \quad \text{and} \quad \left(\frac{a_2}{a_3} \right)^q=\frac{a_3}{a_4}.\]
If $\N(a_1)=\N(a_2)$, from Proposition \ref{trace5} and Equation \eqref{size} it follows that $|Im\,(x^{q-1})|=|Im\,(\Tr(x)/x)|$, a contradiction.

Now assume $i=2$. Then we have $(a_4/a_1)^{q^2}=a_1/a_3$ and
\begin{equation}
\label{form1}
\left(\frac{a_1}{a_2} \right)^{q^2}=\frac{a_3}{a_4}.
\end{equation}
Multiplying these two equations yields $a_4^{q^2+1}=a_1a_2^{q^2}$ and hence 
\begin{equation}
\label{form2}
a_2=a_1^{1+q+q^3}/a_3^{q^3+q}.
\end{equation}
By \eqref{form1} this implies
\begin{equation}
\label{form3}
a_4=a_1^{q^3+1}/a_3^{q^3}.
\end{equation}
If $\N(a_1)=\N(a_3)$, then also $\N(a_1)=\N(a_2)=\N(a_3)=\N(a_4)$. We show that in this case $Im\,(f_\varphi(x)/x)=Im\,(\Tr(x)/x)$, so we must have $\N(a_1)\neq \N(a_3)$. According to Proposition \ref{trace5} it is enough to show $(a_1/a_2)^q=a_2/a_3$ and $(a_2/a_3)^q=a_3/a_4$.
By \eqref{form1} we have $(a_1/a_2)^q=(a_3/a_4)^{q^4}$, which equals $a_2/a_3$ if and only if $(a_2/a_3)^q=a_3/a_4$, i.e. $a_3^{1+q}=a_4a_2^q$.
Taking into account \eqref{form2} and \eqref{form3}, this equality follows from
$\N(a_1)=\N(a_3)$.
\end{proof}

\section{\texorpdfstring{Proof of the main theorem}{Proof of the main theorem}}
\label{section-proof}

In this section we prove Theorem \ref{main}. In order to do this, we use the following two results and the technique developed in \cite{CSMP2016}.

\begin{lemma}[{\cite[Lemma 3.4]{CSMP2016}}]
\label{lmain}
Let $f$ and $g$ be two linearized polynomials over $\F_{q^n}$. If $Im\,(f(x)/x)=Im \,(g(x)/x)$, then for each positive integer $d$ the following holds
\[\sum_{x\in \F_{q^n}^*} \left(\frac{f(x)}{x}\right)^d = \sum_{x\in \F_{q^n}^*} \left(\frac{g(x)}{x}\right)^d.\]
\end{lemma}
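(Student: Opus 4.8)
The plan is to compute each power sum by grouping its terms according to the value $v=f(x)/x$, and then to show that, because of the characteristic $p$, each power sum depends only on the set $Im\,(f(x)/x)$ and not on how many times each value is attained.

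First I would fix $v\in\F_{q^n}$ and analyse the fibre $\{x\in\F_{q^n}^*\colon f(x)/x=v\}$. The condition $f(x)/x=v$ is equivalent to $f(x)-vx=0$, and since $x\mapsto f(x)-vx$ is an $\F_q$-linear map of $\F_{q^n}$, its set of zeros is an $\F_q$-subspace of $\F_{q^n}$, of dimension $k_v:=\dim_{\F_q}\ker(f-v\cdot\mathrm{id})$. Hence the fibre over $v$, after removing the always-present solution $x=0$, has exactly $q^{k_v}-1$ elements; in particular $v\in Im\,(f(x)/x)$ if and only if $k_v\geq 1$.

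The key observation is the reduction modulo $p$. Partitioning $\F_{q^n}^*$ into these fibres gives, in $\F_{q^n}$,
\[\sum_{x\in\F_{q^n}^*}\left(\frac{f(x)}{x}\right)^d=\sum_{v\in\F_{q^n}}v^d\,(q^{k_v}-1).\]
For every $v\in Im\,(f(x)/x)$ we have $k_v\geq 1$, so $q^{k_v}$ is a positive power of $p$ and therefore $q^{k_v}-1\equiv -1\pmod p$; for $v\notin Im\,(f(x)/x)$ the coefficient is $q^{k_v}-1=0$ and simply drops out. Since each integer multiplicity acts on $\F_{q^n}$ only through its residue modulo $p$, this collapses to
\[\sum_{x\in\F_{q^n}^*}\left(\frac{f(x)}{x}\right)^d=-\sum_{v\in Im\,(f(x)/x)}v^d,\]
so the power sum is, up to sign, the $d$-th power sum over the image set and depends on $f$ only through $Im\,(f(x)/x)$.

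Finally I would run the identical computation for $g$, obtaining $\sum_{x}(g(x)/x)^d=-\sum_{v\in Im\,(g(x)/x)}v^d$, and then invoke the hypothesis $Im\,(f(x)/x)=Im\,(g(x)/x)$ to equate the two right-hand sides, which finishes the proof. The only real subtlety — and the point that makes the statement true at all — is that a priori the fibre sizes of $f$ and of $g$ over a common value $v$ need not agree; the argument succeeds precisely because every nonzero multiplicity collapses to $-1$ in characteristic $p$, so the differing multiplicities become invisible and only the underlying image set survives.
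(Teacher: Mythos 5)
Your proof is correct. The paper does not prove this lemma itself but imports it from \cite[Lemma 3.4]{CSMP2016}, and your argument is precisely the standard one used there: the fibre of $f(x)/x$ over $v$ is the set of nonzero elements of $\ker(f-v\cdot\mathrm{id})$, an $\F_q$-subspace, so its cardinality $q^{k_v}-1$ reduces to $-1$ modulo $p$ whenever it is nonempty, and the power sum collapses to $-\sum_{v\in Im\,(f(x)/x)}v^d$, which depends only on the image set.
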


\begin{lemma}[see for example {\cite[Lemma 3.5]{CSMP2016}}]
\label{lfolk}
For any prime power $q$ and integer $d$ we have $\sum_{x\in \F_{q}^*} x^d=-1$ if $q-1 \mid d$ and
$\sum_{x\in \F_{q}^*} x^d=0$ otherwise.
\end{lemma}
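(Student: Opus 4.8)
The plan is to exploit the cyclic structure of the multiplicative group $\Fq^*$, which has order exactly $q-1$, and to split the argument according to whether $q-1$ divides $d$. Throughout, all computations take place inside $\Fq$, so the value ``$-1$'' is to be read in characteristic $p$.

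First I would dispose of the case $q-1\mid d$. Every $x\in\Fq^*$ satisfies $x^{q-1}=1$, so writing $d=(q-1)e$ gives $x^d=(x^{q-1})^e=1$ for each such $x$. Hence $\sum_{x\in\Fq^*}x^d$ is a sum of $q-1$ copies of $1$, i.e. it equals $q-1$. Since $q=p^h$ is a power of the characteristic, $q\equiv 0$ and thus $q-1=-1$ in $\Fq$, which is the claimed value.

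For the remaining case $q-1\nmid d$, the key point is that $\Fq^*$ is cyclic of order precisely $q-1$, so it admits a generator $g$ of full multiplicative order $q-1$; consequently $g^d\neq 1$, for otherwise the order of $g$ would divide $d$ and force $q-1\mid d$. Writing $S=\sum_{x\in\Fq^*}x^d$ and using that $x\mapsto gx$ is a bijection of $\Fq^*$, I would compute
\[ g^d S=\sum_{x\in\Fq^*}(gx)^d=\sum_{x\in\Fq^*}x^d=S,\]
so that $(g^d-1)S=0$. As $g^d-1\neq 0$, this forces $S=0$, completing the argument.

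There is no genuine obstacle here: the statement is elementary once one invokes the cyclicity of $\Fq^*$. The only point demanding a moment's care is the existence, in the second case, of an element $y$ with $y^d\neq 1$; this is exactly what a generator of $\Fq^*$ of order $q-1$ supplies, and it is precisely the ingredient that makes the multiplicative-shift trick collapse the sum to zero. As an alternative I could instead sum the geometric series $\sum_{j=0}^{q-2}(g^d)^j$ directly, since $(g^d)^{q-1}=(g^{q-1})^d=1$ makes the numerator $(g^d)^{q-1}-1$ vanish when $g^d\neq1$, while the series trivially evaluates to $q-1=-1$ when $g^d=1$; but the shift argument is cleaner and avoids checking that the denominator is nonzero.
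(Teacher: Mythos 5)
Your proof is correct. The paper does not prove this lemma itself --- it only cites it as folklore from \cite[Lemma 3.5]{CSMP2016} --- and your argument (reducing to $q-1=-1$ in characteristic $p$ when $q-1\mid d$, and otherwise using a generator $g$ of the cyclic group $\F_q^*$ with $g^d\neq 1$ together with the substitution $x\mapsto gx$ to force the sum to vanish) is exactly the standard proof one would supply; it also covers $d\le 0$ without change since $x\mapsto gx$ remains a bijection of $\F_q^*$ and $(gx)^d=g^dx^d$ holds for all integers $d$.
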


\begin{proposition}
\label{prop}
Let $f(x)=\sum_{i=0}^4 a_i x^{q^i}$ and $g(x)=\sum_{i=0}^4 b_i x^{q^i}$ be two $q$-polynomials over $\F_{q^5}$ such that $Im\,(f(x)/x)=Im \,(g(x)/x)$.
Then the following relations hold between the coefficients of $f$ and $g$:
\begin{equation}
\label{e0}
a_0=b_0,
\end{equation}
\begin{equation}
\label{e1}
a_1a_4^q=b_1 b_4^q,
\end{equation}
\begin{equation}
\label{e2}
a_2a_3^{q^2}=b_2 b_3^{q^2},
\end{equation}
\begin{equation}
\label{e3}
a_1^{q+1}a_3^{q^2}+a_2a_4^{q+q^2}=b_1^{q+1}b_3^{q^2}+b_2b_4^{q+q^2},
\end{equation}
\begin{equation}
\label{e4}
a_1a_2^{q+q^3}+a_3^{1+q^3}a_4^q=b_1b_2^{q+q^3}+b_3^{1+q^3}b_4^q,
\end{equation}
\begin{equation}
\label{e5}
a_1^{1+q+q^2}a_2^{q^3}+a_2^{1+q}a_3^{q^2+q^3}+a_1^qa_3^{1+q^2+q^3}+a_1^{q^2}a_2a_3^{q^3}a_4^q+a_2^{1+q+q^3}a_4^{q^2}+
\end{equation}
\[a_1^qa_2^{q^3}a_3a_4^{q^2}+a_1a_2^qa_3^{q^2}a_4^{q^3}+a_1^{1+q^2}a_4^{q+q^3}+a_3a_4^{q+q^2+q^3}=\]
\[b_1^{1+q+q^2}b_2^{q^3}+b_2^{1+q}b_3^{q^2+q^3}+b_1^qb_3^{1+q^2+q^3}+b_1^{q^2}b_2b_3^{q^3}b_4^q+b_2^{1+q+q^3}b_4^{q^2}+\]
\[b_1^qb_2^{q^3}b_3b_4^{q^2}+b_1b_2^qb_3^{q^2}b_4^{q^3}+b_1^{1+q^2}b_4^{q+q^3}+b_3b_4^{q+q^2+q^3},\]
\begin{equation}
\label{e6}
\N(a_1)+\N(a_2)+\N(a_3)+\N(a_4)+\Tr(a_1^qa_2^{q^2+q^3+q^4}a_3+a_1^{q+q^3}a_2^{q^4}a_3^{1+q^2}+
\end{equation}
\[a_1^{q+q^2}a_2^{q^3+q^4}a_4+a_1^{q+q^2+q^4}a_3^{q^3}a_4+a_2^qa_3^{q^2+q^3+q^4}a_4+a_1^{q^2}a_3^{q^3+q^4}a_4^{1+q}+\]
\[a_2^{q+q^3}a_3^{q^4}a_4^{1+q^2}+a_1^{q^2}a_2^{q^4}a_4^{1+q+q^3})=\]
\[\N(b_1)+\N(b_2)+\N(b_3)+\N(b_4)+\Tr(b_1^qb_2^{q^2+q^3+q^4}b_3+b_1^{q+q^3}b_2^{q^4}b_3^{1+q^2}+\]
\[b_1^{q+q^2}b_2^{q^3+q^4}b_4+b_1^{q+q^2+q^4}b_3^{q^3}b_4+b_2^qb_3^{q^2+q^3+q^4}b_4+b_1^{q^2}b_3^{q^3+q^4}b_4^{1+q}+\]
\[b_2^{q+q^3}b_3^{q^4}b_4^{1+q^2}+b_1^{q^2}b_2^{q^4}b_4^{1+q+q^3}).\]
\end{proposition}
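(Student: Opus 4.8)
The plan is to feed Lemma~\ref{lmain} a carefully chosen sequence of exponents $d$ and to evaluate each power sum $\sum_{x\in\F_{q^5}^*}(f(x)/x)^d$ by means of Lemma~\ref{lfolk}. Concretely, I would treat, in this order,
\[
d=1,\quad q+1,\quad q^2+1,\quad q^2+q+1,\quad q^3+q+1,\quad q^3+q^2+q+1,\quad q^4+q^3+q^2+q+1,
\]
which will yield \eqref{e0}, \eqref{e1}, \eqref{e2}, \eqref{e3}, \eqref{e4}, \eqref{e5}, \eqref{e6} respectively. The case $d=1$ is immediate: only the $i=0$ summand of $f(x)/x=\sum_i a_ix^{q^i-1}$ has exponent divisible by $q^5-1$, so the sum is $-a_0$, giving \eqref{e0}.

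For a fixed $d=\sum_{s\in S}q^s$ with $S\subseteq\{0,1,2,3,4\}$ and $0\in S$, I would use the Frobenius identity $(f(x)/x)^{q^s}=\sum_i a_i^{q^s}x^{q^s(q^i-1)}$ to factor
\[
\Big(\frac{f(x)}{x}\Big)^{d}=\prod_{s\in S}\Big(\frac{f(x)}{x}\Big)^{q^s}=\sum_{(i_s)_{s\in S}}\Big(\prod_{s\in S}a_{i_s}^{q^s}\Big)\,x^{\sum_{s\in S}q^s(q^{i_s}-1)}.
\]
Since each factor contributes exactly one index, no multinomial coefficients (hence no characteristic obstructions) appear. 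By Lemma~\ref{lfolk}, $\sum_{x\in\F_{q^5}^*}(f(x)/x)^d$ is $-1$ times the sum of the coefficients $\prod_{s\in S}a_{i_s}^{q^s}$ whose $x$-exponent is divisible by $q^5-1$. Using $q^5\equiv 1$, the survival condition becomes $\sum_{s\in S}q^{(s+i_s)\bmod 5}\equiv\sum_{s\in S}q^{s}\pmod{q^5-1}$, and its solutions are exactly the tuples for which $s\mapsto(s+i_s)\bmod 5$ is a bijection of $S$ onto itself. Enumerating these $|S|!$ bijections produces the explicit list of surviving monomials; doing the same for $g$ and invoking Lemma~\ref{lmain} equates the two lists.

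The final step is to peel off the $a_0$-terms. The tuple with all $i_s=0$ contributes the pure term $a_0^{d}$, matched to $b_0^{d}$ by \eqref{e0}; every other surviving monomial containing a factor $a_0$ is, after taking Frobenius powers, a product of $a_0=b_0$ with an expression already equated by one of the previously established relations. Thus all $a_0$-terms cancel between the two sides, leaving precisely the asserted identity. This is exactly why the exponents must be processed in increasing order: for $d=q^2+q+1$ the six bijections of $\{0,1,2\}$ give the survivors $a_0^{1+q+q^2}$, $a_0a_1^qa_4^{q^2}$, $a_0^{q^2}a_1a_4^q$, $a_0^qa_2a_3^{q^2}$, $a_1^{1+q}a_3^{q^2}$, $a_2a_4^{q+q^2}$, and the first four are killed using \eqref{e0}, \eqref{e1}, \eqref{e2}, leaving exactly \eqref{e3}; the same pattern reduces \eqref{e4} to \eqref{e1}--\eqref{e2} and \eqref{e5} to \eqref{e1}--\eqref{e4}.

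The main obstacle is the bookkeeping in the survival enumeration, and in particular ruling out ``carry'' solutions other than the bijective ones. Both sides of the congruence are sums of $|S|\le 5$ powers of $q$ with exponents in $\{0,\dots,4\}$; when $q$ exceeds the largest possible digit multiplicity the two sides are genuine base-$q$ digit strings with digits $<q$, so uniqueness of the base-$q$ representation forces the residues $(s+i_s)\bmod 5$ to be distinct and to coincide with $S$, making the bijection description exact. For the finitely many small $q$ one checks directly that a coincidence of residues produces a reduced number whose fixed digit pattern cannot match that of $d$, so no extra solutions arise. The remaining difficulty is purely computational and is heaviest for $d=q^4+q^3+q^2+q+1$: here the five constant tuples $i_s\equiv c$ give $\N(a_0),\dots,\N(a_4)$ (with $\N(a_0)$ cancelling against $\N(b_0)$), the non-constant bijections group into Frobenius orbits yielding the eight trace terms of \eqref{e6}, and verifying that all accompanying $a_0$-orbits cancel through \eqref{e0}--\eqref{e5} is the longest verification.
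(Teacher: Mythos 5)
Your proposal is correct and follows essentially the same route as the paper: both apply Lemma \ref{lmain} together with Lemma \ref{lfolk} for the exponents $d=q^3+q^2+q+1$ and $d=q^4+q^3+q^2+q+1$, enumerate the surviving exponent tuples (which are exactly your derangement-type bijections, $9$ and $44$ of them respectively), and peel off the $a_0$-terms via the lower-degree relations. The only differences are cosmetic: the paper imports \eqref{e0}--\eqref{e4} from \cite[Lemma 3.6]{CSMP2016} rather than rederiving them from $d=1,q+1,q^2+1,q^2+q+1,q^3+q+1$, and it rules out carry solutions by a direct case analysis on the multiplier $k$ (plus a computer check for $q=2,3$ in the case of \eqref{e6}), exactly the finite verification you anticipate.
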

\begin{proof}

Equations \eqref{e0}--\eqref{e4} follow from \cite[Lemma 3.6]{CSMP2016}.
To prove \eqref{e5} we will use Lemma \ref{lfolk} together with Lemma \ref{lmain} with $d=q^3+q^2+q+1$. This gives us
\[\sum_{1\leq i,j,m,n \leq 4} a_ia_j^qa_m^{q^2}a_n^{q^3} \sum_{x\in \F_{q^5}^*} x^{q^i-1+q^{j+1}-q+q^{m+2}-q^2+q^{n+3}-q^3}=\]
\[\sum_{1\leq i,j,m \leq 4} b_ib_j^qb_m^{q^2}b_n^{q^3} \sum_{x\in \F_{q^5}^*} x^{q^i-1+q^{j+1}-q+q^{m+2}-q^2+q^{n+3}-q^3}.\]
We have $\sum_{x\in \F_{q^n}^*} x^{q^i-1+q^{j+1}-q+q^{m+2}-q^2+q^{n+3}-q^3}=-1$ if and only if
\begin{equation}
\label{eki}
q^i+q^{j+1}+q^{m+2}+q^{n+3} \equiv 1+q+q^2+q^3 \pmod {q^5-1},
\end{equation}
and zero otherwise. Suppose that the former case holds. The right-hand side of \eqref{eki} is smaller than the left-hand side, thus
\[q^i+q^{j+1}+q^{m+2}+q^{n+3} = 1+q+q^2+q^3 + k(q^5-1),\]
for some positive integer $k$.
We have $q^i+q^{j+1}+q^{m+2}+q^{n+3}\leq q^4+q^5+q^6+q^7< 1+q+q^2+q^3+(q^2+q+2)(q^5-1)$ and hence $k\leq q^2+q+1$.
If $i=1$, then $q^2 \mid 1-k$ and hence $k=1$, $j=m=1$ and $n=2$, or $k=q^2+1$, $n=4$ and either $j=2$ and $m=3$, or $j=4$ and $m=1$.
If $i>1$, then $q^2$ divides $q+1-k$ and hence $k=q+1$, or $k=q^2+q+1$.
In the former case $i=j=n=2$ and $m=4$, or $i=j=2$ and $n=m=3$, or $i=3$, $j=1$, $m=4$ and $n=2$, or $i=3$, $j=1$ and $m=n=3$,
or $m=1$, $i=2$, $j=4$ and $n=3$. In the latter case $i=3$ and $n=m=j=4$.
Then \eqref{e5} follows.

To prove \eqref{e6} we follow the previous approach with $d=q^4+q^3+q^2+q+1$.
We obtain
\[\sum a_ia_j^qa_m^{q^2}a_n^{q^3}a_r^{q^4}=\sum b_ib_j^qb_m^{q^2}b_n^{q^3}b_r^{q^4},\]
where the summation is on the quintuples $(i,j,m,n,r)$ with elements taken from $\{1,2,3,4\}$ such that  $L_{i,j,m,n,r}:=(q^i-1)+(q^{j+1}-q)+(q^{m+2}-q^2)+(q^{n+3}-q^3)+(q^{r+4}-q^4)$ is divisible by $q^5-1$.
Then
\[L_{i,j,m,n,r} \equiv K_{i,j',m',n',r'} \pmod {q^5-1},\]
where
\[K_{i,j',m',n',r'}=(q^i-1)+(q^{j'}-q)+(q^{m'}-q^2)+(q^{n'}-q^3)+(q^{r'}-q^4),\]
such that
\begin{equation}
\label{trex}
j'\in \{0,2,3,4\}, \quad m'\in \{0,1,3,4\}, \quad n'\in \{0,1,2,4\}, \quad r'\in \{0,1,2,3\}
\end{equation}
with $j'\equiv j+1,\, m'\equiv m+2,\, n'\equiv n+3,\, r'\equiv r+4 \pmod 5$.

For $q=2$ and $q=3$ we can determine by computer those quintuples $(i,j',m',n',r')$ for which $K_{i,j',m',n',r'}$ is divisible by $q^5-1$ and hence \eqref{e6} follows. So we may assume $q>3$. Then
\[3-q^2-q^3-q^4=(q-1)+(1-q)+(1-q^2)+(1-q^3)+(1-q^4) \leq \]
\[K_{i,j',m',n',r'}\leq \]
\[(q^4-1)+(q^4-q)+(q^4-q^2)+(q^4-q^3)+(q^3-q^4)=3q^4-1-q-q^2,\]
and hence $L_{i,j,m,n,r}$ is divisible by $q^5-1$ if and only if $K_{i,j',m',n',r'}=0$.
It follows that
\[q^i+q^{j'}+q^{m'}+q^{n'}+q^{r'}=1+q+q^2+q^3+q^4.\]
So $\sum_{h=0}^4 c_h q^h=1+q+q^2+q^3+q^4$ for some $0\leq c_h \leq 4$ with $\sum_{h=0}^4 c_h=5$. For $q>3$ this happens only if $c_h=1$ for $h=0,1,2,3,4$ thus
we have to find those quintuples $(i,j',m',n',r')$ for which $i\in \{1,2,3,4\}$, $\{i,j',m',n',r'\}=\{0,1,2,3,4\}$ and \eqref{trex} are satisfied. This can  be done by computer and the 44 solutions yield \eqref{e6}.
\end{proof}


\subsection*{Proof of Theorem \ref{main}}

Since $f$ has maximum field of linearity $\F_q$, we cannot have $a_1=a_2=a_3=a_4=0$. If three of $\{a_1,a_2,a_3,a_4\}$ are zeros, then $f(x)=a_0x+a_ix^{q^i}$, for some $i\in\{1,2,3,4\}$. Hence with $\varphi$ represented by $\begin{pmatrix}
	1 & 0 \\
	-a_0/a_i & 1/a_i \\
\end{pmatrix}$ we have $f_{\varphi}(x)=x^{q^i}$. Then Proposition \ref{imtrans} and Corollary \ref{norm} give $g_\varphi(x)=\beta x^{q^j}$ where $\N(\beta)=1$ and   $j\in\{1,2,3,4\}$.
Now, we distinguish three main cases according to the number of zeros among $\{a_1,a_2,a_3,a_4\}$.


\subsection*{Two zeros among $\{a_1, a_2, a_3, a_4\}$}

Applying Proposition \ref{prop} we obtain $a_0=b_0$.
The two non-zero coefficients can be chosen in six different ways, however the cases $a_1a_2\neq 0$ and $a_1a_3\neq 0$ correspond to $a_3a_4\neq 0$ and $a_2a_4\neq 0$, respectively,
since $Im\,(f(x)/x)=Im \,(\hat f(x)/x)$. Thus, after possibly interchanging $f$ with $\hat f$, we may consider only four cases.
\medskip

First let $f(x)=a_0x+a_1x^q+a_4x^{q^4}$, $a_1a_4\neq 0$.
Applying Proposition \ref{prop} we obtain
\begin{equation}
\label{f1}
a_1a_4^q=b_1 b_4^q,
\end{equation}
\[0=b_2 b_3^{q^2}.\]
Since $b_1b_4\neq 0$, from \eqref{e3} we get $b_2=b_3=0$ and hence \eqref{e6} gives
\[ \N(a_1)+\N(a_4)=\N(b_1)+\N(b_4).\]
Also, from \eqref{f1} we have $\N(a_1a_4)=\N(b_1b_4)$.
It follows that either $\N(a_1)=\N(b_1)$ and $\N(a_4)=\N(b_4)$, or $\N(a_1)=\N(b_4)$ and $\N(a_4)=\N(b_1)$.
In the first case $b_1=a_1\lambda^{q-1}$ for some $\lambda\in \F_{q^5}^*$ and by \eqref{f1}
we get $g(x)=f(\lambda x)/\lambda$. In the latter case  $b_1=a_4^q\lambda^{q-1}$ for some $\lambda\in \F_{q^5}^*$ and by \eqref{f1} we get  $g(x)=\hat{f}(\lambda x)/\lambda$.
\medskip

Now consider $f(x)=a_1x^q+a_3x^{q^3}$, $a_1a_3\neq 0$.
Applying Proposition \ref{prop} and arguing as above we have either $b_2=b_4=0$ or $b_1=b_3=0$.
In the first case from \eqref{e5}
we obtain
\[a_1^qa_3^{1+q^2+q^3}=b_1^qb_3^{1+q^2+q^3}\]
and together with \eqref{e3} this yields $\N(a_1)=\N(b_1)$ and $\N(a_3)=\N(b_3)$.
In this case $g(x)=f(\lambda x)/\lambda$ for some $\lambda\in \F_{q^5}^*$.
If $b_1=b_3=0$, then in $\hat{g}(x)$ the coefficients of $x^{q^2}$ and $x^{q^4}$ are zeros thus applying the result obtained in the former case we get $\lambda\hat{g}(x)=f(\lambda x)$ and hence after substituting $y=\lambda x$ and taking the adjoints of both sides we obtain $g(y)=\hat{f}(\mu y)/\mu$, where $\mu=\lambda^{-1}$.

\medskip

The cases $f(x)=a_1x^q+a_2x^{q^2}$ and $f(x)=a_2x^{q^2}+a_3x^{q^3}$ can be handled in a similar way, applying Equations \eqref{e1}--\eqref{e6} of Proposition \ref{prop}.

\subsection*{One zero among $\{a_1, a_2, a_3, a_4\}$}

Since $Im\,(f(x)/x)=Im \,(\hat f(x)/x)$, we may assume $a_1=0$ or $a_2=0$.
First suppose $a_1=0$. Then by \eqref{e1} either $b_1=0$ or $b_4=0$. In the former case
putting together Equations \eqref{e2}, \eqref{e3}, \eqref{e4} we get $\N(a_i)=\N(b_i)$ for $i\in \{2,3,4\}$ and hence there exists $\lambda\in \F_{q^5}^*$ such that $g(x)=f(\lambda x)/\lambda$.
If $a_1=b_4=0$, then in $\hat{g}(x)$ the coefficients of $x^q$ is zero thus applying the previous result we get $g(x)=\hat{f}(\mu x)/\mu$, where $\mu=\lambda^{-1}$.

Now suppose $a_2=0$. Then by \eqref{e2} either $b_2=0$ or $b_3=0$. Using the same approach but applying \eqref{e1}, \eqref{e3} and \eqref{e4} we obtain $g(x)=f(\lambda x)/\lambda$ or $g(x)=\hat{f}(\lambda x)/\lambda$.

\subsection*{Case $a_1a_2a_3a_4\neq 0$}

We will apply \eqref{e0}-\eqref{e5} of Proposition \ref{prop}.
Note that Equations \eqref{e1} and \eqref{e2} yield $a_1a_2a_3a_4\neq 0 \Leftrightarrow b_1b_2b_3b_4\neq 0$.
Multiplying \eqref{e3} by $a_2$ and applying \eqref{e2} yield
\[a_2^2a_4^{q+q^2}-a_2(b_1^{q+1}b_3^{q^2}+b_2b_4^{q+q^2})+a_1^{q+1}b_3^{q^2}b_2=0.\]
Taking \eqref{e1} into account, this is equivalent to \[(a_2a_4^{q+q^2}-b_1^{q+1}b_3^{q^2})(a_2a_4^{q+q^2}-b_2b_4^{q+q^2})=0.\]
Multiplying \eqref{e4} by $a_1$ and applying \eqref{e1} yield
\[a_1^2a_2^{q+q^3}-a_1(b_1b_2^{q+q^3}+b_3^{1+q^3}b_4^q)+a_3^{1+q^3}b_4^qb_1=0.\]
Taking \eqref{e2} into account, this is equivalent to
\[(a_1a_2^{q+q^3}-b_1b_2^{q+q^3})(a_1a_2^{q+q^3}-b_3^{1+q^3}b_4^q)=0.\]
We distinguish four cases:
\begin{enumerate}
	\item $a_2a_4^{q+q^2}=b_1^{q+1}b_3^{q^2}$ and $a_1a_2^{q+q^3}=b_1b_2^{q+q^3}$,
	\item $a_2a_4^{q+q^2}=b_1^{q+1}b_3^{q^2}$ and $a_1a_2^{q+q^3}=b_3^{1+q^3}b_4^q$,
	\item $a_2a_4^{q+q^2}=b_2b_4^{q+q^2}$ and $a_1a_2^{q+q^3}=b_1b_2^{q+q^3}$,
	\item $a_2a_4^{q+q^2}=b_2b_4^{q+q^2}$ and $a_1a_2^{q+q^3}=b_3^{1+q^3}b_4^q$.
\end{enumerate}
We show that these four cases produce the relations:
\begin{equation}
\label{1per4}
\N\left(\frac{b_1}{a_4}\right)=\frac{a_1a_2^{q+q^3}}{a_4^qa_3^{q^3+1}}=\frac{b_1b_2^{q+q^3}}{b_4^qb_3^{q^3+1}},
\end{equation}
\begin{equation}
\label{2per4}
\N\left(\frac{b_1}{a_4}\right)=1,
\end{equation}
\begin{equation}
\label{3per4}
\N\left(\frac{b_1}{a_1}\right)=1,
\end{equation}
\begin{equation}
\label{4per4}
\N\left(\frac{b_1}{a_1}\right)=\frac{a_3^{q^3+1}a_4^q}{a_1 a_2^{q+q^3}}=\frac{b_1b_2^{q+q^3}}{b_3^{q^3+1}b_4^q},
\end{equation}
respectively.
To see \eqref{1per4} observe that from $a_2a_4^{q+q^2}=b_1^{q+1}b_3^{q^2}$ and \eqref{e1} we get
\begin{equation}
\label{new}
\N\left(\frac{b_1}{a_4}\right)=\left(\frac{b_1^{q+1}}{a_4^{q+q^2}}\right)^{q^2+1}\frac{b_1^{q^4}}{a_4}=
\left(\frac{a_2^{q^2+1}}{b_3^{q^2+q^4}}\right)\frac{a_1^{q^4}}{b_4}=\frac{a_1a_2^{q+q^3}}{b_4^qb_3^{q^3+1}},
\end{equation}
and hence by $a_1a_2^{q+q^3}=b_1b_2^{q+q^3}$ and \eqref{e4} we get \eqref{1per4}.
Equation \eqref{2per4} immediately follows from \eqref{new} taking $a_1a_2^{q+q^3}=b_3^{1+q^3}b_4^q$ into account.
Similarly, using \eqref{e1} and \eqref{e4} we get \eqref{3per4} and \eqref{4per4}.

In Case 3 by \eqref{3per4} we get $b_1=a_1\lambda^{q-1}$ for some $\lambda\in \F_{q^5}^*$ and by \eqref{e1} and \eqref{e2} we have $g(x)=f(\lambda x)/\lambda$. Analogously, in Case 2 $g(x)=\hat{f}(\lambda x)/\lambda$.
Note that Case 4 is just Case 3 after replacing $g$ by $\hat{g}$ since $Im\,(g(x)/x)=Im \,(\hat g(x)/x)$.
This allows us to restrict ourself to Case 1. It will be useful to express $a_1$, $a_2$, $a_3$ as follows:
\begin{equation}
\label{ais}
a_1=\frac{b_1 b_4^q}{a_4^q}, \quad a_2=\frac{b_1^{q+1}b_3^{q^2}}{a_4^{q+q^2}}, \quad a_3=\frac{b_2^{q^3}b_4^{1+q^4}}{a_1^{q^3+q^4}}.
\end{equation}

We are going to simplify \eqref{e5}.
Using Equations \eqref{ais} and \eqref{e1} it is easy to see that $a_2^{1+q}a_3^{q^2+q^3}=b_2^{1+q}b_3^{q^2+q^3}$, $a_1^{1+q^2}a_4^{q+q^3}=b_1^{1+q^2}b_4^{q+q^3}$, $a_1^{q^2}a_2a_3^{q^3}a_4^q=b_1b_2^qb_3^{q^2}b_4^{q^3}$, $a_1^qa_2^{q^3}a_3a_4^{q^2}=b_1^qb_2^{q^3}b_3b_4^{q^2}$, $a_1a_2^qa_3^{q^2}a_4^{q^3}=b_1^{q^2}b_2b_3^{q^3}b_4^q$ and hence
\begin{equation}
\label{eq5b}
a_1^{1+q+q^2}a_2^{q^3}+a_1^qa_3^{1+q^2+q^3}+a_2^{1+q+q^3}a_4^{q^2}+a_3a_4^{q+q^2+q^3}=
\end{equation}
\[b_1^{1+q+q^2}b_2^{q^3}+b_1^qb_3^{1+q^2+q^3}+b_2^{1+q+q^3}b_4^{q^2}+b_3b_4^{q+q^2+q^3}.\]
The following equations can be proved applying \eqref{e1}, \eqref{e2} and \eqref{ais}:
\begin{equation}
\label{ea}
\N\left(\frac{b_1}{a_4}\right)b_3b_4^{q+q^2+q^3}=a_2^{q^3}a_1^{1+q+q^2},
\end{equation}
\begin{equation}
\label{eb}
\N\left(\frac{a_4}{b_1}\right)b_4^{q^2}b_2^{1+q+q^3}=a_1^qa_3^{1+q^2+q^3},
\end{equation}
\begin{equation}
\label{ec}
\N\left(\frac{b_1}{a_4}\right)b_1^qb_3^{1+q^2+q^3}=a_2^{1+q+q^3}a_4^{q^2},
\end{equation}
\begin{equation}
\label{ed}
\N\left(\frac{a_4}{b_1}\right)b_2^{q^3}b_1^{1+q+q^2}=a_3a_4^{q+q^2+q^3}.
\end{equation}

Then \eqref{eq5b} can be written as
\[(\N(b_1/a_4)-1)(b_3b_4^{q+q^2+q^3}+b_1^qb_3^{1+q^2+q^3})=\frac{\N(b_1/a_4)-1}{\N(b_1/a_4)}(b_4^{q^2}b_2^{1+q+q^3}+b_2^{q^3}b_1^{1+q+q^2}).\]
If $\N(b_1/a_4)=1$, then we are in Case 2 and hence the assertion follows. Otherwise dividing by $\N(b_1/a_4)-1$ and substituting $\N(b_1/a_4)=b_1b_2^{q+q^3}/b_4^qb_3^{q^3+1}$ we obtain
\[ b_1b_2^{q+q^3}(b_3b_4^{q+q^2+q^3}+b_1^qb_3^{1+q^2+q^3})=b_4^qb_3^{q^3+1}(b_4^{q^2}b_2^{1+q+q^3}+b_2^{q^3}b_1^{1+q+q^2}).\]
Substituting $\N(b_1/a_4) b_4^qb_3^{q^3+1}/b_2^{q+q^3}$ for $b_1$ and using the fact that $\N(b_1/a_4)\in \F_q$ we obtain
\[ \left(1-\N\left(\frac{b_1}{a_4}\right)^2\N\left(\frac{b_3}{b_2}\right)\right)\left(\N\left(\frac{b_1}{a_4}\right)b_4^{q+q^3}b_3-b_2^{1+q+q^3}\right)=0.\]
This gives us two possibilities:
\begin{equation}
\label{sec}
\N\left(\frac{b_1}{a_4}\right)b_4^{q+q^3}b_3=b_2^{1+q+q^3},
\end{equation}
or
\begin{equation}
\label{sec2}
\N\left(\frac{b_2}{b_3}\right)=\N\left(\frac{b_1}{a_4}\right)^2.
\end{equation}
First consider the case when \eqref{sec2} holds. We show $\N(a_1)=\N(b_1)$, that is, \eqref{3per4}.
We have $a_2a_4^{q+q^2}=b_1^{q+1}b_3^{q^2}$ from \eqref{ais} and hence $\N(a_2)\N(a_4)^2=\N(b_1)^2\N(b_3)$. It follows that
\[\N\left(\frac{b_1}{a_4}\right)^2=\N\left(\frac{a_2}{b_3}\right).\]
Combining this with \eqref{sec2} we obtain $\N(b_2)=\N(a_2)$. Then $\N(b_1)=\N(a_1)$ follows from $a_1a_2^{q+q^3}=b_1b_2^{q+q^3}$ since we are in Case 1.

From now on we can suppose that \eqref{sec} holds.
Then \eqref{1per4} yields
\begin{equation}
\label{elso}
\left(\frac{b_1}{b_2}\right)^{q^2}=\frac{b_3}{b_4}.
\end{equation}
Multiplying both sides of \eqref{sec} by $b_4^{q^2}$ and applying \eqref{ea} gives
\begin{equation}
\label{nuovo}
a_2^{q^3}a_1^{1+q+q^2}=b_2^{1+q+q^3}b_4^{q^2}.
\end{equation}
Then multiplying \eqref{ea} by \eqref{eb} and taking \eqref{nuovo} into account we obtain
\[a_1^qa_3^{1+q^2+q^3}=b_3b_4^{q+q^2+q^3}.\]
Multiplying \eqref{ec} and \eqref{ed} yield
\[(b_1^qb_3^{1+q^2+q^3})(b_2^{q^3}b_1^{1+q+q^2})=(a_2^{1+q+q^3}a_4^{q^2})(a_3a_4^{q+q^2+q^3}).\]
On the other hand, from \eqref{eq5b} it follows that
\[b_1^qb_3^{1+q^2+q^3}+b_2^{q^3}b_1^{1+q+q^2}=a_2^{1+q+q^3}a_4^{q^2}+a_3a_4^{q+q^2+q^3}.\]
Hence $b_1^qb_3^{1+q^2+q^3}=a_2^{1+q+q^3}a_4^{q^2}$ and $b_2^{q^3}b_1^{1+q+q^2}=a_3a_4^{q+q^2+q^3}$, or
$b_1^qb_3^{1+q^2+q^3}=a_3a_4^{q+q^2+q^3}$ and $b_2^{q^3}b_1^{1+q+q^2}=a_2^{1+q+q^3}a_4^{q^2}$.
In the former case \eqref{ec} yields $\N(b_1/a_4)=1$, which is \eqref{2per4}.
In the latter case \eqref{1per4} and \eqref{ed} gives
\[\frac{b_4^qb_3^{q^3+1}}{b_1b_2^{q+q^3}}b_2^{q^3}b_1^{1+q+q^2}=\N(a_4/b_1)b_2^{q^3}b_1^{1+q+q^2}=b_1^qb_3^{1+q^2+q^3},\]
and hence
\begin{equation}
\label{masodik}
\frac{b_4}{b_2}=\left(\frac{b_3}{b_1}\right)^q.
\end{equation}
Equation \eqref{elso} is equivalent to
\[b_4b_1^{q^2}=b_3b_2^{q^2},\]
while \eqref{masodik} is equivalent to
\[b_4b_1^q=b_3^qb_2.\]
Dividing these two equations by each other yield
\[b_2^{q^2-1}=b_3^{q-1}b_1^{q^2-q}.\]
It follows that there exists $\lambda\in \F_q^*$ such that
\begin{equation}
\label{lambda}
b_2^{q+1}=\lambda b_3 b_1^q,
\end{equation}
thus
\[b_3=b_2^{q+1}/(b_1^q\lambda)\]
and
\[b_4=b_2^{1+q+q^2}/(b_1^{q+q^2}\lambda).\]
Then \eqref{1per4} can be written as
\[\N\left(\frac{b_1}{a_4}\right)=\frac{b_1b_2^{q+q^3}}{b_4^qb_3^{q^3+1}}=\N\left(\frac{b_1}{b_2}\right)\lambda^3,\]
and hence
\[\N\left(\frac{b_2}{a_4}\right)=\lambda^3.\]
Then, using the previous expressions for $b_3$ and $b_4$ and taking  \eqref{e1} and  \eqref{e2} into account,  we can express $\N(a_i)$ for $i=1,2,3,4$ as follows
\begin{equation}
\label{bp1}
\N(a_1)=\N(b_2)^2/(\N(b_1)\lambda^2),
\end{equation}
\begin{equation}
\label{bp2}
\N(a_2)=\N(b_1)\lambda,
\end{equation}
\[\N(a_3)=\N(b_2)^3/(\N(b_1)^2\lambda^6),\]
\[\N(a_4)=\N(b_2)/\lambda^3.\]

Before we go further, we simplify \eqref{e6} and prove
\begin{equation}
\label{sumofnorms}
\N(a_1)+\N(a_2)+\N(a_3)+\N(a_4)=\N(b_1)+\N(b_2)+\N(b_3)+\N(b_4).
\end{equation}
It is enough to show
\[\Tr(\overbrace{\hbox{$a_1^qa_2^{q^2+q^3+q^4}a_3$}}^{A_1}+\overbrace{\hbox{$a_1^{q+q^3}a_2^{q^4}a_3^{1+q^2}$}}^{A_2}+
\overbrace{\hbox{$a_1^{q+q^2}a_2^{q^3+q^4}a_4$}}^{A_3}+\overbrace{\hbox{$a_1^{q+q^2+q^4}a_3^{q^3}a_4$}}^{A_4}+\]
\[\overbrace{\hbox{$a_2^qa_3^{q^2+q^3+q^4}a_4$}}^{A_5}+\overbrace{\hbox{$a_1^{q^2}a_3^{q^3+q^4}a_4^{1+q}$}}^{A_6}+
\overbrace{\hbox{$a_2^{q+q^3}a_3^{q^4}a_4^{1+q^2}$}}^{A_7}+\overbrace{\hbox{$a_1^{q^2}a_2^{q^4}a_4^{1+q+q^3}$}}^{A_8})=\]
\[\Tr(\overbrace{\hbox{$b_1^qb_2^{q^2+q^3+q^4}b_3$}}^{B_1}+\overbrace{\hbox{$b_1^{q+q^3}b_2^{q^4}b_3^{1+q^2}$}}^{B_7}+
\overbrace{\hbox{$b_1^{q+q^2}b_2^{q^3+q^4}b_4$}}^{B_3}+\overbrace{\hbox{$b_1^{q+q^2+q^4}b_3^{q^3}b_4$}}^{B_8}+\]
\[\overbrace{\hbox{$b_2^qb_3^{q^2+q^3+q^4}b_4$}}^{B_5}+\overbrace{\hbox{$b_1^{q^2}b_3^{q^3+q^4}b_4^{1+q}$}}^{B_6}+
\overbrace{\hbox{$b_2^{q+q^3}b_3^{q^4}b_4^{1+q^2}$}}^{B_2}+\overbrace{\hbox{$b_1^{q^2}b_2^{q^4}b_4^{1+q+q^3}$}}^{B_4}),\]
which can be done by proving $\Tr(A_i)=\Tr(B_i)$ for $i=1,2,\ldots,8$.
Expressing $a_3$ with $a_4$ in \eqref{ais} gives $a_3=b_2^{q^3}a_4^{q^4+1}/b_1^{q^3+q^4}$.
Then $a_1,a_2,a_3$ can be eliminated in all of the $A_i$, $i\in \{1,2\ldots,8\}$.
It turns out that this procedure eliminates also $a_4$ when $i\in\{2,4,7,8\}$ and we obtain
$A_2=B_2^{q^2}$, $A_4=B_4^{q^2}$, $A_7=B_7^{q^3}$ and $A_8=B_8^{q^2}$.
In each of the other cases what remains is $\N(a_4)$ times an expression in $b_1,b_2,b_3,b_4$.
Then by using \eqref{1per4} we can also eliminate $\N(a_4)$ and hence $A_i$ can be expressed in terms of $b_1,b_2,b_3,b_4$. This gives $A_1=B_1$ and $A_5=B_5$. Applying also $\eqref{elso}$ and $\eqref{masodik}$ we obtain $A_3=B_3^{q^2}$ and $A_6=B_6$.

Let $x=\N(b_2/b_1)$, then \eqref{sumofnorms} gives us the following equation
\[x^2\lambda^4+\lambda^7+x^3+x\lambda^3=\lambda^6+x\lambda^6+x^2\lambda+\lambda x^3.\]
After rearranging we get:
\[(1-\lambda)(x-\lambda)(x-\lambda^2)(x-\lambda^3)=0.\]
First suppose $\lambda\neq 1$, then we have three possibilities:
\[x=\lambda,\]
in which case $N(b_1)=N(a_1)$ follows from \eqref{bp1}, which is Case 3;
\[x=\lambda^3,\]
in which case $N(a_4)=N(b_1)$ follows from \eqref{bp2}, which is Case 2;
\[x=\lambda^2,\]
in which case we show 
that there exists $\varphi\in\Gamma\mathrm{L}(2,q^5)$ such that either $Im\,(g_\varphi(x)/x)=Im\,(x^{q-1})$ or $Im\,(g_\varphi(x)/x)=Im\,(\Tr(x)/x)$. In the former case by Proposition \ref{imtrans} and Corollary \ref{norm} we get $f_\varphi(x)=\alpha x^{q^i}$ and $g_\varphi(x)=\beta x^{q^j}$ for some $i,j\in\{1,2,3,4\}$, with $\N(\alpha)=\N(\beta)=1$. In the latter case, by Theorem \ref{trace} and by Propositions \ref{imtrans} and \ref{lambda0}, there exists $\lambda\in\F_{q^5}^*$ such that $g(x)=f(\lambda x)/\lambda$.

According to Proposition \ref{pseudoalg} part 2, it is enough to show
\[(b_4/b_1)^{q^2}=b_1/b_3, \quad (b_1/b_2)^{q^2}=b_3/b_4.\]
The second equation is just \eqref{elso}, thus it is enough to prove the first one.
First we show
\begin{equation}
\label{meta}
b_2b_3^{q+q^3}=b_1^{1+q+q^3}.
\end{equation}
From \eqref{lambda} we have
\[\N\left(\frac{b_2}{b_1}\right)=\lambda^2=\left(\frac{b_2^{q+1}}{b_3b_1^q}\right)^2,\]
and hence after rearranging
\[\frac{b_2^{q^2+q^3+q^4}b_3}{b_1^{1+q^2+q^3+q^4}}=\frac{b_2^{q+1}}{b_3b_1^q}.\]
On the right-hand side we have $\lambda$, which is in $\F_q$, thus, after taking $q$-th powers on the left and $q^3$-th powers on the right, the following also holds
\[\frac{b_2^{q^3+q^4+1}b_3^q}{b_1^{q+q^3+q^4+1}}=\frac{b_2^{q^3+q^4}}{b_3^{q^3}b_1^{q^4}}.\]
After rearranging we obtain \eqref{meta}.
Now we show that $(b_4/b_1)^{q^2}=b_1/b_3$ is equivalent to \eqref{meta}.
Expressing $b_4$ from \eqref{elso} we get
\[(b_4/b_1)^{q^2}=b_1/b_3 \Leftrightarrow b_3^{1+q^2}b_2^{q^4}=b_1^{1+q^2+q^4},\]
where the equation on the right-hand side is just the $q^4$-th power of \eqref{meta}.

Now consider the case when $\lambda=1$. Then $b_3=b_2^{q+1}/b_1^q$, $b_4=b_2^{1+q+q^2}/b_1^{q+q^2}$ and it follows from Proposition \ref{pseudoalg}
that there exists $\varphi\in\Gamma\mathrm{L}(2,q^5)$ such that either $Im\,(g_\varphi(x)/x)=Im\,(x^{q-1})$ or $Im\,(g_\varphi(x)/x)=Im\,(\Tr(x)/x)$. As above, the assertion follows either from Proposition \ref{imtrans} and Corollary \ref{norm} or from Theorem \ref{trace} and by Propositions \ref{imtrans} and \ref{lambda0}.


This finishes the proof when $\prod_{i=1}^4 a_ib_i \neq 0$.

\qed

\section{\texorpdfstring{New maximum scattered linear sets of $\PG(1,q^5)$}{New maximum scattered linear sets of PG(1,q5)}}
\label{geom}

A point set $L$ of a line  $\Lambda=\PG(W,\F_{q^n})\allowbreak=\PG(1,q^n)$ is said to be an \emph{$\F_q$-linear set} of $\Lambda$ of rank $n$ if it is
defined by the non-zero vectors of an $n$-dimensional $\F_q$-vector subspace $U$ of the two-dimensional $\F_{q^n}$-vector space $W$, i.e.
\[L=L_U:=\{\la {\bf u} \ra_{\mathbb{F}_{q^n}} \colon {\bf u}\in U\setminus \{{\bf 0} \}\}.\]
One of the most natural questions about linear sets is their equivalence. Two linear sets $L_U$ and $L_V$ of $\PG(1,q^n)$ are said to be \emph{$\mathrm{P\Gamma L}$-equivalent} (or simply \emph{equivalent}) if there is an element in $\mathrm{P\Gamma L}(2,q^n)$ mapping $L_U$ to $L_V$. In the applications it is crucial to have methods to decide whether two linear sets are equivalent or not. This can be a difficult problem and some results in this direction can be found in \cite{CSZ2015, CSMP2016}.
 If $L_U$ and $L_V$ are two equivalent $\F_q$-linear sets of rank $n$ in $\PG(1,q^n)$ and $\varphi$ is an element of $\G(2,q^n)$ which induces a collineation mapping $L_U$ to $L_V$, then $L_{U^\varphi}=L_V$. Hence the first step to face with the equivalence problem for linear sets is to determine which $\F_q$-subspaces can define the same linear set. 
 
For any $q$-polynomial $f(x)=\sum_{i=0}^{n-1} a_i x^{q^i}$ over $\F_{q^n}$, the graph ${\mathcal G}_f=\{(x,f(x)) \colon x\in \F_{q^n}\}$ is an $\F_q$-vector subspace of the 2-dimensional vector space $V=\F_{q^n}\times\F_{q^n}$ and the point set
\[L_f:=L_{{\mathcal G}_f}=\{\la (x,f(x))\ra_{\mathbb{F}_{q^n}}  \colon x\in \F_{q^n}^*\}\] is an $\F_q$-linear set of rank $n$ of $\PG(1,q^n)$.
 In this context, the problem posed in \eqref{start} corresponds to find all $\F_q$-subspaces of $V$ of rank $n$ (cf. \cite[Proposition 2.3]{CSMP2016}) defining the linear set $L_f$. The maximum field of linearity of $f$ is the maximum field of linearity of $L_f$, and it is well-defined (cf. Proposition \ref{fieldoflinearity} and  \cite[Proposition 2.3]{CSMP2016}). Also, by the Introduction from any $q$-polynomial $f$ over $\F_{q^n}$, the linear sets $L_f$, $L_{f_\lambda}$ (with $f_\lambda(x):=f(\lambda x)/\lambda$ for each $\lambda\in\F_{q^n}^*$) and $L_{\hat f}$ coincide (cf. \cite[Lemma 2.6]{BGMP2015} and the first part of \cite[Section 3]{CSMP2016}). 
 If $f$ and $g$ are two equivalent $q$-polynomials over $\F_{q^n}$, i.e. ${\mathcal G}_f$ an ${\mathcal G}_g$ are equivalent w.r.t. the action of the group $\Gamma{\mathrm L}(2,q^n)$, then the corresponding $\F_q$-linear sets $L_f$ and $L_g$ of $\PG(1,q^n)$ are ${\mathrm P}\Gamma{\mathrm L}(2,q^n)$-equivalent. The converse does not hold (see  \cite{CSZ2015} and \cite{CSMP2016} for further details). More precisely,
\begin{proposition}
\label{pgamma-equiv}
Let $L_f$ and $L_g$ be two $\F_q$-linear sets of rank $n$ of $\PG(1,q^n)$. Then $L_f$ and $L_g$ are $\mathrm{P\Gamma L}(2,q^n)$-equivalent if and only if there exists and element $\varphi\in\Gamma\mathrm{L}(2,q^n)$ such that $Im\,(f_\varphi(x)/x)=Im\,(g(x)/x)$.
\qed
 \end{proposition}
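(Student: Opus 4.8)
The plan is to reduce the projective equivalence of the two linear sets to the equality of image sets by passing through the graphs $\mathcal{G}_f$ and $\mathcal{G}_g$. The key fact I would isolate first is a dictionary between image sets and point sets. For any $q$-polynomial $h$ over $\F_{q^n}$, since $h(0)=0$ the only vector of $\mathcal{G}_h$ with first coordinate $0$ is the zero vector; hence every point of $L_h$ is $\langle(x,h(x))\rangle_{\F_{q^n}}=\langle(1,h(x)/x)\rangle_{\F_{q^n}}$ with $x\in\F_{q^n}^*$, so that $L_h=\{\langle(1,z)\rangle_{\F_{q^n}}\colon z\in Im\,(h(x)/x)\}$ and, in particular, $\langle(0,1)\rangle_{\F_{q^n}}\notin L_h$. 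Consequently, for two $q$-polynomials $h$ and $g$ one has $L_h=L_g$ as point sets if and only if $Im\,(h(x)/x)=Im\,(g(x)/x)$; this is exactly the correspondence recorded in \cite[Proposition 2.3]{CSMP2016}.

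For the ``if'' part, suppose $\varphi\in\G(2,q^n)$ satisfies $Im\,(f_\varphi(x)/x)=Im\,(g(x)/x)$. By \eqref{form:equiv} the subspace $\mathcal{G}_{f_\varphi}$ is precisely the image $\varphi(\mathcal{G}_f)$, so the collineation of $\PGaL(2,q^n)$ induced by $\varphi$ maps $L_f=L_{\mathcal{G}_f}$ onto $L_{f_\varphi}=L_{\varphi(\mathcal{G}_f)}$; hence $L_f$ and $L_{f_\varphi}$ are $\PGaL(2,q^n)$-equivalent. The dictionary then gives $L_{f_\varphi}=L_g$, and the two linear sets are equivalent.

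For the ``only if'' part, assume $L_f$ and $L_g$ are $\PGaL(2,q^n)$-equivalent. By the fundamental theorem of projective geometry choose $\varphi\in\G(2,q^n)$ inducing a collineation that maps $L_f$ to $L_g$, so that $L_{\varphi(\mathcal{G}_f)}=L_g$. The point I would stress is that $\varphi$ is automatically admissible with respect to $f$: since $g$ is a $q$-polynomial, $\langle(0,1)\rangle_{\F_{q^n}}\notin L_g=L_{\varphi(\mathcal{G}_f)}$, so $\varphi(\mathcal{G}_f)$ meets the vertical line $\{(0,v)\colon v\in\F_{q^n}\}$ trivially, i.e.\ $k_f$ is injective. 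Therefore $f_\varphi$ is a well-defined $q$-polynomial with $\mathcal{G}_{f_\varphi}=\varphi(\mathcal{G}_f)$, whence $L_{f_\varphi}=L_g$, and the dictionary yields $Im\,(f_\varphi(x)/x)=Im\,(g(x)/x)$.

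The routine verifications—the normalization $\langle(x,h(x))\rangle=\langle(1,h(x)/x)\rangle$ and the fact that a vector $(0,v)$ with $v\neq0$ cannot lie in a graph—are immediate. The one genuinely load-bearing point is the admissibility argument in the ``only if'' direction, namely guaranteeing that the transported subspace $\varphi(\mathcal{G}_f)$ is again the graph of a $q$-polynomial so that $f_\varphi$ exists; this is precisely where the fact that $L_g$ avoids the point $\langle(0,1)\rangle_{\F_{q^n}}$ is essential, and it is the only step requiring care rather than being purely formal.
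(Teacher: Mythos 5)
Your proof is correct and follows exactly the route the paper has in mind: the paper states this proposition with no written proof, but the surrounding discussion (the observation that a collineation induced by $\varphi$ sends $L_U$ to $L_{U^\varphi}$, together with the fact that $L_h=L_g$ iff $Im\,(h(x)/x)=Im\,(g(x)/x)$ from \cite[Proposition 2.3]{CSMP2016}) is precisely your dictionary plus admissibility argument. The only cosmetic quibble is the appeal to the fundamental theorem of projective geometry on a line: what you actually need is just that, by definition, $\mathrm{P\Gamma L}(2,q^n)$-equivalence is induced by some $\varphi\in\Gamma\mathrm{L}(2,q^n)$.
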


Linear sets of rank $n$ of $\PG(1,q^n)$ have size at most $(q^n-1)/(q-1)$.
A linear set $L_U$ of rank $n$ whose size achieves this bound is called \emph{maximum scattered}.
For applications of these objects we refer to \cite{OP2010} and \cite{Lavrauw}. 

Following \cite{LuMaPoTr2014} and \cite{DoDu9} a maximum scattered $\F_q$-linear set $L_U$ of rank $n$ in $\PG(1,q^n)$ is of \emph{pseudoregulus type} if 
it is $\mathrm{P}\Gamma\mathrm{L}(2,q^n)$-equivalent to $L_f$ with $f(x)=x^q$ or, equivalently, if there exists $\varphi\in\mathrm{GL}(2,q^n)$ such that
\[L_{U^\varphi}=\{\la(x,x^q)\ra_{\F_{q^n}}\colon x\in\F_{q^n}^*\}.\]
By Proposition \ref{pgamma-equiv} and Corollary \ref{norm}, it follows
\begin{proposition}
\label{peseud-equiv}
An $\F_q$-linear set $L_f$ of rank $n$ of $\PG(1,q^n)$ is
of pseudoregulus type if and only if $f(x)$ is equivalent to $x^{q^i}$ for some $i$ with $\gcd(i,n)=1$.
\qed
\end{proposition}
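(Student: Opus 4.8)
The plan is to prove Proposition \ref{peseud-equiv} by chaining together Proposition \ref{pgamma-equiv} and Corollary \ref{norm}, since these two results already encode exactly the equivalences we need. The statement to be shown is that $L_f$ is of pseudoregulus type if and only if $f(x)$ is equivalent (as a $q$-polynomial) to $x^{q^i}$ for some $i$ with $\gcd(i,n)=1$. By the definition given just above, $L_f$ is of pseudoregulus type precisely when $L_f$ is $\mathrm{P}\Gamma\mathrm{L}(2,q^n)$-equivalent to $L_{x^q}$, i.e. to the linear set determined by the monomial $x^q$. So the whole task reduces to translating this $\mathrm{P}\Gamma\mathrm{L}$-equivalence of linear sets into an equivalence of the underlying $q$-polynomials, and then identifying precisely which monomials produce a pseudoregulus-type linear set.

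First I would apply Proposition \ref{pgamma-equiv} with $g(x)=x^q$: the linear sets $L_f$ and $L_{x^q}$ are $\mathrm{P}\Gamma\mathrm{L}(2,q^n)$-equivalent if and only if there exists $\varphi\in\Gamma\mathrm{L}(2,q^n)$ with $Im\,(f_\varphi(x)/x)=Im\,(x^{q-1})=Im\,((x^q)/x)$. Thus pseudoregulus type is equivalent to the existence of such a $\varphi$ realizing the image-set equality between $f_\varphi$ and the monomial $x^q$. Next I would invoke Corollary \ref{norm} applied to the pair $f_\varphi$ and the monomial $x^q$ (here $k=1$, so $t=\gcd(1,n)=1$): the corollary forces $f_\varphi(x)=\beta x^{q^s}$ for some $s$ with $\gcd(s,n)=1$ and some $\beta$ with $\N_{q^n/q}(\beta)=\N_{q^n/q}(1)=1$. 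Composing $\varphi$ with the diagonal scaling that absorbs $\beta$ (which is an element of $\mathrm{GL}(2,q^n)$ and hence keeps us inside the equivalence), we conclude that $f$ is equivalent to the bare monomial $x^{q^s}$ with $\gcd(s,n)=1$. This gives the forward direction, i.e. pseudoregulus type implies equivalence to some $x^{q^i}$ with $\gcd(i,n)=1$.

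For the converse, suppose $f$ is equivalent to $x^{q^i}$ with $\gcd(i,n)=1$. I would use Corollary \ref{norm} once more, now in the direction that any two such coprime monomials $x^{q^i}$ and $x^{q^j}$ (with $\gcd(i,n)=\gcd(j,n)=1$) have coinciding image sets $Im\,(x^{q^i-1})=Im\,(x^{q^j-1})$ up to a suitable scalar with matching norm; in particular each $x^{q^i}$ with $\gcd(i,n)=1$ has the same image set as $x^q$ after an admissible $\Gamma\mathrm{L}$-transformation, so by Proposition \ref{pgamma-equiv} the linear set $L_{x^{q^i}}$ is $\mathrm{P}\Gamma\mathrm{L}$-equivalent to $L_{x^q}$. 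Since equivalence of $q$-polynomials descends to $\mathrm{P}\Gamma\mathrm{L}$-equivalence of the associated linear sets (as noted in the discussion preceding Proposition \ref{pgamma-equiv}), $L_f$ is $\mathrm{P}\Gamma\mathrm{L}$-equivalent to $L_{x^q}$ and is therefore of pseudoregulus type. The main obstacle I anticipate is purely bookkeeping: one must be careful that the norm condition $\N(\beta)=1$ coming out of Corollary \ref{norm} can always be absorbed by a genuine $\mathrm{GL}(2,q^n)$ scaling without disturbing the coprimality of the exponent, and that the $\gcd$ condition is preserved in both directions; none of this is deep, but it is exactly the point where the two cited results must be applied with the correct value $t=1$ and the correct admissible $\varphi$.
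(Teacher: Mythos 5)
Your proposal is correct and follows essentially the same route as the paper, which derives the proposition directly from Proposition \ref{pgamma-equiv} and Corollary \ref{norm}. The only minor imprecision is in the converse, where the fact you need is not Corollary \ref{norm} itself but the elementary observation that $Im\,(x^{q^i-1})$ equals the norm-one subgroup whenever $\gcd(i,n)=1$, so all such monomials define the same linear set $L_{x^q}$.
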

For the proof of the previous result see also \cite{LaShZa2013}.

The known pairwise non-equivalent families of $q$-polynomials over $\F_{q^n}$ which define maximum scattered linear sets of rank $n$ in $\PG(1,q^n)$ are

\begin{enumerate}
\item $f_{s}(x)= x^{q^s}$, $1\leq s\leq n-1$, $\gcd(s,n)=1$ (\cite{BL2000,CSZ2016}),
\item $g_{s,\delta}(x)= \delta x^{q^s} + x^{q^{n-s}}$, $n\geq 4$, $\N_{q^n/q}(\delta)\notin \{0,1\}$ \footnote{This condition  implies $q\neq 2$.}, $\gcd(s,n)=1$ (\cite{LP2001} for $s=1$, \cite{Sh,LTZ} for $s\neq 1$),
\item $h_{s,\delta}(x):= \delta x^{q^s}+x^{q^{s+n/2}}$, $n\in \{6,8\}$, $\gcd(s,n/2)=1$, $\N_{q^n/q^{n/2}}(\delta) \notin \{0,1\}$, for the precise conditions on $\delta$ and $q$ see \cite[Theorems 7.1 and 7.2]{CMPZ} \footnote{Also here $q>2$, otherwise the linear set defined by $h_{s,\delta}$ is never scattered.},
\item $k_b(x):=x^q+x^{q^3}+bx^{q^5}$, $n=6$, with $b^2+b=1$, $q\equiv 0,\pm 1 \pmod 5$ (\cite{CSMZ2017}).
\end{enumerate}

\begin{remark}
All the previous polynomials in cases 2.,3.,4. above are examples of functions which are not equivalent to monomials but the set of directions determined by their graph has size $(q^n-1)/(q-1)$, i.e. the corresponding linear sets are maximum scattered. The existence of such linearized polynomials is briefly discussed also in \cite[p.\ 132]{Praha}.
\end{remark}

For $n=2$ the maximum scattered $\F_q$-linear sets coincide with the Baer sublines. For $n=3$ the maximum scattered linear sets  are all of pseudoregulus type and the corresponding $q$-polynomials are all $\mathrm{GL}(2,q^3)$-equivalent to $x^q$  (cf. \cite{LaVa2010}). For $n=4$ there are two families of maximum scattered linear sets. More precisely, if $L_f$ is a maximum scattered linear set of rank $4$ of $\PG(1,q^4)$, with maximum field of linearity $\F_{q}$, then there exists $\varphi\in\mathrm{GL}(2,q^4)$ such that either $f_\varphi(x)=x^q$ or $f_\varphi(x)=\delta x^q+x^{q^3}$, for some $\delta\in\F_{q^4}^*$ with $\N_{q^4/q}(\delta)\notin\{0,1\}$ (cf. \cite{CSZ2017}). It is easy to see that $L_{f_{1}}=L_{f_s}$ for any $s$ with $\gcd(s,n)=1$, and $f_i$ is equivalent to $f_j$ if and only if $j\in\{i,n-i\}$. Also, the graph of $g_{s,\delta}$ is  $\mathrm{GL}(2,q^n)$-equivalent to the graph of $g_{n-s,\delta^{-1}}$.

In \cite[Theorem 3]{LP2001} Lunardon and Polverino proved that $L_{g_{1,\delta}}$ and $L_{f_1}$ are not $\mathrm{P}\Gamma \mathrm{L}(2,q^n)$-equivalent when $q>3$, $n\geq 4$. This was extended also for $q=3$ \cite[Theorem 3.4]{CSMZ2017}. 
Also in \cite{CSMZ2017}, it has been proven that  for $n=6,8$ the linear sets $L_{f_1}$, $L_{g_{s,\delta}}$, $L_{h_{s',\delta'}}$ and $L_{k_b}$ are pairwise non-equivalent for any choice of $s,s',\delta, \delta',b$.

%
%
%

In this section we prove that one can find for each $q>2$ a suitable $\delta$ such that $L_{g_{2,\delta}}$ of $\PG(1,q^5)$ is not equivalent to the linear sets $L_{g_{1,\mu}}$ of $\PG(1,q^5)$ for each $\mu\in\F_{q^5}^*$, with $\N_{q^5/q}(\mu)\notin\{0,1\}$. In order to do this, we first reformulate Theorem \ref{main} as follows.
%

\begin{theorem}[Theorem \ref{main}]
\label{main1}
	Let $f(x)$ and $g(x)$ be two $q$-polynomials over $\F_{q^5}$ such that $L_f=L_g$. Then either  $L_f=L_g$ is of pseudoregulus type or there exists $\lambda\in \F_{q^5}^*$ such that $g(x)=f(\lambda x)/\lambda$ or $g(x)=\hat{f}(\lambda x)/\lambda$ holds.
\end{theorem}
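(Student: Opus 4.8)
The plan is to deduce Theorem~\ref{main1} directly from Theorem~\ref{main} by translating the language of linear sets back into the language of images of $f(x)/x$, using the dictionary already established in this section. First I would observe that the hypothesis $L_f=L_g$ is the statement that the two linear sets defined by the graphs $\mathcal{G}_f$ and $\mathcal{G}_g$ are literally \emph{equal} as point sets of $\PG(1,q^5)$, not merely $\PGaL$-equivalent. This is precisely the condition $Im\,(f(x)/x)=Im\,(g(x)/x)$: a point $\la(x,f(x))\ra_{\F_{q^5}}$ lies on $L_f$ exactly when $f(x)/x$ is a permitted slope, so the equality of the linear sets is equivalent to the equality of the slope sets. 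Thus the hypothesis of Theorem~\ref{main1} is the same as the hypothesis of Theorem~\ref{main}, once we confirm that the maximum field of linearity is $\F_q$ (which may be imposed without loss of generality, since otherwise one passes to the true field of linearity $\F_{q^t}$ and replaces $q$ by $q^t$).

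Next I would apply Theorem~\ref{main} to extract its dichotomy. The theorem gives two alternatives: either there is $\varphi\in\G(2,q^5)$ with $f_\varphi(x)=\alpha x^{q^i}$ and $g_\varphi(x)=\beta x^{q^j}$ (with $\N(\alpha)=\N(\beta)$), or there is $\lambda\in\F_{q^5}^*$ with $g(x)=f(\lambda x)/\lambda$ or $g(x)=\hat f(\lambda x)/\lambda$. The second alternative is already exactly the second alternative of Theorem~\ref{main1}, so nothing needs to be done there. For the first alternative, I would invoke Proposition~\ref{peseud-equiv}: since $f_\varphi(x)=\alpha x^{q^i}$ is equivalent to the monomial $x^{q^i}$, and $\gcd(i,5)=1$ automatically holds for every $i\in\{1,2,3,4\}$ (as $5$ is prime), the linear set $L_f$ is of pseudoregulus type. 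This yields the first alternative of Theorem~\ref{main1}.

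The one genuine subtlety—and the step I expect to require the most care—is bookkeeping about which $\F_q$-subspace defines the linear set. Theorem~\ref{main} is phrased for $q$-polynomials with a common slope set, whereas $L_f=L_g$ is an equality of \emph{linear sets}, and a given linear set can be defined by several inequivalent subspaces. The bridge is the already-cited fact (\cite[Proposition 2.3]{CSMP2016}) that the $\F_q$-subspaces of rank $n$ defining $L_f$ correspond exactly to the $q$-polynomials $g$ with $Im\,(g(x)/x)=Im\,(f(x)/x)$, provided $\mathcal{G}_g$ is a graph, i.e. meets the vertical line $\la(0,1)\ra$ trivially. I would therefore need to check that the equality $L_f=L_g$, with both written as graphs, indeed forces the slope-set equality rather than something weaker; this is where the condition that both $\mathcal{G}_f$ and $\mathcal{G}_g$ are genuine graphs (no vertical component) is used. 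Once this identification is in hand, the remainder is the purely formal substitution described above, and no new computation beyond what Theorem~\ref{main}, Proposition~\ref{peseud-equiv}, and the preliminary observations already provide is needed.
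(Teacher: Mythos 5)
Your proposal is correct and follows essentially the same route as the paper, which presents Theorem~\ref{main1} as a direct reformulation of Theorem~\ref{main} via the dictionary $L_f=L_g\Leftrightarrow Im\,(f(x)/x)=Im\,(g(x)/x)$ together with Proposition~\ref{peseud-equiv} (noting $\gcd(i,5)=1$ automatically). Your handling of the field-of-linearity hypothesis is also fine: since $5$ is prime, the only degenerate case is $f(x)=a_0x$, where the conclusion holds trivially.
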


\begin{theorem}
Let $g_{2,\delta}(x)=\delta x^{q^2}+x^{q^3}$ for some $\delta\in \F_{q^5}^*$ with $\N(\delta)^5 \neq 1$. Then $L_{g_{2,\delta}}$ is not ${\mathrm P}\Gamma{\mathrm L}(2,q^5)$-equivalent to any linear set $L_{g_{1,\mu}}$ and hence it is a new a maximum scattered linear set.
\end{theorem}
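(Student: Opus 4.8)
The plan is to prove the contrapositive via Theorem~\ref{main1}. Suppose, for contradiction, that $L_{g_{2,\delta}}$ is $\mathrm{P}\Gamma\mathrm{L}(2,q^5)$-equivalent to some $L_{g_{1,\mu}}$. Both are maximum scattered of rank $5$, so by Proposition~\ref{pgamma-equiv} there is a $\varphi\in\Gamma\mathrm{L}(2,q^5)$ with $Im\,((g_{2,\delta})_\varphi(x)/x)=Im\,(g_{1,\mu}(x)/x)$. Applying Theorem~\ref{main1} to the pair $f=(g_{2,\delta})_\varphi$ and $g=g_{1,\mu}$ (equivalently, working up to equivalence so that $L_f=L_g$) leaves exactly two possibilities: either the common linear set is of pseudoregulus type, or $g_{1,\mu}$ is a scalar substitution of $(g_{2,\delta})_\varphi$ or its adjoint.

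First I would dispose of the pseudoregulus case. By Proposition~\ref{peseud-equiv}, $L_{g_{2,\delta}}$ is of pseudoregulus type precisely when $g_{2,\delta}(x)=\delta x^{q^2}+x^{q^3}$ is equivalent to a monomial $x^{q^i}$ with $\gcd(i,5)=1$. Since $g_{2,\delta}$ is a genuine binomial with $\delta\neq 0$ and maximum field of linearity $\F_q$, I would argue it cannot be equivalent to a monomial: one can invoke that the two families in the list of known maximum scattered sets are non-equivalent, or more directly apply the coefficient relations of Proposition~\ref{prop}. Writing $g_{2,\delta}$ in the standard form (with $a_2=\delta$, $a_3=1$, and $a_0=a_1=a_4=0$), the vanishing pattern of the coefficients is incompatible with that forced on a monomial after applying an arbitrary $\varphi$; indeed Corollary~\ref{norm} pins down the image of a monomial under equivalence to again be a monomial, and a binomial with two genuinely nonzero middle terms cannot become such.

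The remaining case is the heart of the argument: I must rule out that $g_{1,\mu}(x)=\mu x^{q}+x^{q^4}$ equals $h(\lambda x)/\lambda$ or $\hat h(\lambda x)/\lambda$ for $h=(g_{2,\delta})_\varphi$ and some $\lambda\in\F_{q^5}^*$. The clean way is to note that these trivial solutions force $L_{g_{2,\delta}}$ and $L_{g_{1,\mu}}$ to in fact be $\mathrm{GL}$-equivalent in a very rigid manner, and then to track the exponents: $g_{2,\delta}$ has nonzero terms in degrees $q^2,q^3$ (so its adjoint has terms in degrees $q^2,q^3$ as well, since $\widehat{g_{2,\delta}}(x)=\delta^{q^3}x^{q^3}+x^{q^2}$), whereas $g_{1,\mu}$ has nonzero terms in degrees $q,q^4$. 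A scalar substitution $x\mapsto\lambda x$ preserves the support (the set of exponents $q^i$ appearing), so no scalar substitution of $g_{2,\delta}$ or its adjoint can have support $\{q,q^4\}$. Hence this case is genuinely impossible \emph{unless} the equivalence $\varphi$ is nontrivial, which lands us back in the pseudoregulus branch already excluded. The only subtlety to handle carefully is the role of $\varphi$: Theorem~\ref{main1} is stated after reducing to $L_f=L_g$, so I would first transport $g_{2,\delta}$ by $\varphi$ and then observe that the norm condition $\N(\delta)^5\neq 1$ is exactly what is needed to prevent the collapse into the monomial (pseudoregulus) case across \emph{all} choices of $\varphi$.

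The main obstacle I anticipate is making the pseudoregulus exclusion fully rigorous for every admissible $\varphi$ rather than for a fixed representative, since $\Gamma\mathrm{L}(2,q^5)$ acts by fractional-linear maps on the image set and a priori could convert the binomial into a monomial for some special $\delta$. I expect the hypothesis $\N(\delta)^5\neq 1$ to be precisely the arithmetic condition that blocks this: it should translate, via the norm invariants extracted in Proposition~\ref{prop} (Equation~\eqref{e6} and the $\N(a_i)$ relations), into a genuine obstruction to $g_{2,\delta}$ being $\Gamma\mathrm{L}$-equivalent to $x^{q^i}$. Verifying that $\N(\delta)^5\neq 1$ guarantees $g_{2,\delta}$ is scattered (so that $L_{g_{2,\delta}}$ genuinely has maximum size and the comparison is legitimate) and simultaneously non-pseudoregulus will be the delicate computational step, and I would carry it out by substituting the coefficients of $g_{2,\delta}$ into the invariants of Proposition~\ref{prop} and checking the norm identities fail for a monomial image.
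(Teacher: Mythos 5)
There is a genuine gap, and it sits exactly at what you yourself call ``the heart of the argument.'' After reducing to the case $(g_{2,\delta})_\varphi(x)=g_{1,\mu}(\lambda x)/\lambda$ or $(g_{2,\delta})_\varphi(x)=\hat g_{1,\mu}(\lambda x)/\lambda$, you try to dispose of it by a support argument: $g_{2,\delta}$ and its adjoint live on the exponents $\{q^2,q^3\}$ while $g_{1,\mu}$ lives on $\{q,q^4\}$, and scalar substitution preserves support. But the polynomial being compared to $g_{1,\mu}(\lambda x)/\lambda$ is not $g_{2,\delta}$; it is $h=(g_{2,\delta})_\varphi=h_f\circ k_f^{-1}$ for an arbitrary $\varphi\in\Gamma\mathrm{L}(2,q^5)$ (Proposition~\ref{fvarphi}), and this is in general a full $q$-polynomial with all five coefficients nonzero. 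Your escape clause --- ``unless the equivalence $\varphi$ is nontrivial, which lands us back in the pseudoregulus branch already excluded'' --- is a non sequitur: a nontrivial $\varphi$ does not make the common linear set pseudoregulus, it just changes the support of the representing polynomial. So the case you must rule out is precisely: does there exist \emph{any} $\varphi$ (matrix $\begin{pmatrix}A&B\\C&D\end{pmatrix}$, automorphism $\tau$) carrying the graph of $g_{2,\delta}$ onto the graph of $z\mapsto \alpha z^q+\beta z^{q^4}$ with $\alpha\beta\neq0$, $\N(\alpha)\neq\N(\beta)$? The paper answers this by writing out the graph equality, comparing coefficients (which forces $C=0$ and four relations among $A,B,D,\alpha,\beta,\delta^\tau$), eliminating to get $B^{q^4-q}=\delta^{(q+1)\tau}\alpha/\beta$ and $B^{q^3-q^4}=\delta^{q^4\tau}\beta^{q^2+1}/\alpha^{q^2+1}$, and taking norms to conclude $\N(\delta)^{5\tau}=1$. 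That computation is the proof; your proposal never performs it or anything equivalent.

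A related misplacement: you expect the hypothesis $\N(\delta)^5\neq1$ to be what blocks the pseudoregulus case. It is not; it is exactly the condition that makes the coefficient system above unsolvable. The pseudoregulus branch of Theorem~\ref{main1} is excluded for free, the way you mention in passing: $L_{g_{1,\mu}}$ is known not to be of pseudoregulus type (Lunardon--Polverino, extended to $q=3$ in the cited work), so if $L_{(g_{2,\delta})_\varphi}=L_{g_{1,\mu}}$ the common set cannot be pseudoregulus --- no computation with Proposition~\ref{prop} is needed, and your alternative ``vanishing pattern of coefficients'' argument for that step suffers from the same support fallacy. In short: the overall skeleton (contradiction, Proposition~\ref{pgamma-equiv}, Theorem~\ref{main1}, two branches) matches the paper, and the pseudoregulus branch can be closed by citation, but the decisive branch is left unproved and the shortcut offered for it is invalid.
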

\begin{proof}
	Suppose, contrary to our claim, that  $L_{g_{2,\delta}}$ is ${\mathrm P}\Gamma{\mathrm L}(2,q^5)$-equivalent to a linear set $L_{g_{1,\mu}}$. From Proposition \ref{pgamma-equiv} and Theorem \ref{main1}, taking into account that $L_{g_{1,\mu}}$ is not of pseudoregulus type, it follows that there exist $\varphi\in\Gamma{\mathrm L}(2,q^5)$ and $\lambda\in\F_{q^5}^*$ such that either $(g_{2,\delta})_{\varphi}(x)=g_{1,\mu}(\lambda x)/\lambda$ or $(g_{2,\delta})_{\varphi}(x)=\hat g_{1,\mu}(\lambda x)/\lambda$. This is equivalent to say that 
	there exist $A,B,C,D \in \F_{q^5}$ with $AD-BC \neq 0$ and a field automorphism $\tau$ of $\F_{q^5}$
such that
\[
\left\{
\begin{pmatrix}
A & B \\
C & D \\
\end{pmatrix}
\begin{pmatrix}
x^\tau \\
h(x)^\tau \\
\end{pmatrix}
\colon x\in \F_{q^5}
\right\}
=
\left\{
\begin{pmatrix}
z\\
\alpha z^q + \beta z^{q^4}\\
\end{pmatrix}
\colon z\in \F_{q^5}
\right\}
,\]
where  $N(\alpha) \neq N(\beta)$ and $\alpha \beta \neq 0$.
	We may substitute $x^{\tau}$ by $y$, then
	\[\alpha (Ay+B\delta^\tau y^{q^2}+By^{q^3})^q+\beta (Ay+B\delta^\tau y^{q^2}+By^{q^3})^{q^4} = \]
	\[ Cy+D\delta^\tau y^{q^2}+Dy^{q^3}\]
for each $y\in \F_{q^5}$.
	Comparing coefficients yields $C=0$ and
	\begin{equation}
	\label{1}
	\alpha A^q+\beta B^{q^4}\delta^{q^4 \tau}=0,
	\end{equation}
	\begin{equation}
	\label{2}
	\beta B^{q^4}=D\delta^\tau,
	\end{equation}
	\begin{equation}
	\label{3}
	\alpha B^q \delta^{q\tau} =D,
	\end{equation}
	\begin{equation}
	\label{4}
	\alpha B^q +\beta A^{q^4}=0.
	\end{equation}
	Conditions \eqref{2} and \eqref{3} give
	\begin{equation}
	\label{primo}
	B^{q^4-q}= \delta^{(q+1)\tau} \alpha/\beta.
	\end{equation}
	On the other hand from \eqref{4} we get $A^q=- B^{q^3}\alpha^{q^2}/\beta^{q^2}$ and substituting this into \eqref{1} we have
	\begin{equation}
	\label{secondo}
	B^{q^3-q^4}=\delta^{q^4 \tau} \beta^{q^2+1}/\alpha^{q^2+1}.
	\end{equation}
	Equations \eqref{primo} and \eqref{secondo} give $N(\beta/\alpha)=N(\delta)^{2\tau}$ and  $N(\alpha/\beta)^2=N(\delta)^{\tau}$, respectively. It follows that
	$N(\delta)^{5\tau}=1$ and hence $N(\delta)^5=1$, a contradiction.
\end{proof}

\section*{Open problems}

We conclude the paper by the following open problems.

\begin{enumerate}
	\item Is it true also for $n>5$ that for any pair of $q$-polynomials $f(x)$ and $g(x)$ of $\F_{q^n}[x]$, with maximum field of linearity $\F_q$, if $Im\,(f(x)/x)=Im\,(g(x)/x)$ then either there exists $\varphi\in \Gamma\mathrm{L}(2,q^n)$ such that $f_{\varphi}(x)=\alpha x^{q^i}$ and $g_{\varphi}(x)=\beta x^{q^j}$ with $\N(\alpha)=\N(\beta)$ and $\gcd(i,n)=\gcd(j,n)=1$, or there exists $\lambda\in \F_{q^n}^*$ such that $g(x)=f(\lambda x)/\lambda$ or $g(x)=\hat f(\lambda x)/\lambda$?
	\item Is it possible, at least for small values of $n>4$, to classify, up to equivalence, the $q$-polynomials $f(x)\in \F_{q^n}[x]$ such that $|Im\,(f(x)/x)|=(q^n-1)/(q-1)$? Find new examples!
	\item Is it possible, at least for small values of $n$, to classify, up to equivalence, the $q$-polynomials $f(x)\in \F_{q^n}[x]$ such that  $|Im\,(f(x)/x)|=q^{n-1}+1$? Find new examples!
	\item Is it possible, at least for small values of $n$, to classify, up to equivalence, the $q$-polynomials $f(x)\in \F_{q^n}[x]$ such that in the multiset  $\{f(x)/x \colon x\in \F_{q^n}^*\}$ there is a unique element which is represented more than $q-1$ times? In this case the linear set $L_f$ is an \emph{$i$-club} of rank $n$ and when $q=2$, then such linear sets correspond to translation KM-arcs cf. \cite{KMarcs2} (a KM-arc, or $(q+t)$-arc of type $(0,2,t)$, is a set of $q+t$ points of $\PG(2,2^n)$, such that each line meet the point set in 0,2 or in $t$ points, cf. \cite{KMarcs}). Find new examples!
	\item Determine the equivalence classes of the set of $q$-polynomials in $\F_{q^4}[x]$.
	\item Determine, at least for small values of $n$, all the possible sizes of $Im\,(f(x)/x)$ where $f(x)\in \F_{q^n}[x]$ is a $q$-polynomial.
	
\end{enumerate}

\noindent Bence Csajb\'ok\\
MTA--ELTE Geometric and Algebraic Combinatorics Research Group\\
ELTE E\"otv\"os Lor\'and University, Budapest, Hungary\\
Department of Geometry\\
1117 Budapest, P\'azm\'any P.\ stny.\ 1/C, Hungary\\
{{\em csajbokb@cs.elte.hu}}
\bigskip

\noindent Giuseppe Marino, Olga Polverino\\
Dipartimento di Matematica e Fisica,\\
Universit\`a degli Studi della Campania ``Luigi Vanvitelli'',\\
Viale Lincoln 5, I-\,81100 Caserta, Italy\\
{{\em giuseppe.marino@unicampania.it}, {\em olga.polverino@unicampania.it}}
\bigskip

\end{document}